\documentclass[11pt]{amsart}
\usepackage{etex}
\usepackage{amsmath,amssymb,amsthm}
\usepackage{amsfonts,amscd,cancel,stackrel}
\usepackage{array}
\usepackage[all]{xy}
\usepackage{euscript}
\usepackage{memhfixc}
\usepackage{latexsym}
\usepackage{tikz, color}
\usepackage{tikz-3dplot}
\usetikzlibrary{shapes,backgrounds}

\usepackage[colorlinks,citecolor=red,pagebackref,hypertexnames=false]{hyperref}
\usepackage{graphics,enumerate}

\usepackage{verbatim}
\usepackage{algorithm2e}
\usepackage{blkarray,multicol, float}
\usepackage{multirow,mathtools, leftidx}

\usepackage{enumerate}
\usepackage{esint}
\usepackage{caption}
\usepackage{subcaption} 
\usepackage{relsize}

\RequirePackage{fix-cm}
\DeclareMathSizes{10}{6}{5}{4}

\DeclareFontFamily{OT1}{rsfs}{}
\DeclareFontShape{OT1}{rsfs}{n}{it}{<-> rsfs10}{}
\DeclareMathAlphabet{\mathscr}{OT1}{rsfs}{n}{it}

\newcommand{\FF}{\mathbb F}


\newcommand\lM{\begin{bmatrix}}
\newcommand\rM{\end{bmatrix}}

\newcommand{\V}{\mathcal{V}}

\theoremstyle{Theorem}
\newtheorem{theorem}{Theorem}[section]
\newtheorem{corollary}[theorem]{Corollary}
\newtheorem{proposition}[theorem]{Proposition}
\newtheorem{lemma}[theorem]{Lemma}
\theoremstyle{definition}
\newtheorem{definition}[theorem]{Definition}

\newtheorem{example}[theorem]{Example}

\pagestyle{empty}

\newcommand{\sC}{\mathscr{C}}
\newcommand{\sP}{\mathscr{P}}
\newcommand{\sB}{\mathscr{B}}

\tdplotsetmaincoords{45}{45}

%


\pagestyle{plain}
\title{Polarization of Neural Rings} 

     \author{Sema G\"unt\"urk\"un}
     \address{University of Michigan, Department of Mathematics, USA}
     \email{gunturku@umich.edu}

     \author{Jack Jeffries*}
     \address{University of Michigan, Department of Mathematics, USA}
     \email{jackjeff@umich.edu}
     \thanks{* The NSF Postdoctoral Research Fellowship
DMS~\#1606353}
     
      \author{Jeffrey Sun**}
     \address{University of Michigan, Department of Mathematics, USA}
     \email{jeffjeff@umich.edu}
     \thanks{** The NSF grant DMS~\#1306992 of Professor Harm Derksen}


     \keywords{Polarization, Neural ideals, Monomial Ideals.}
     \subjclass[2010]{13F20, 13P25, 13L99, 92B20}

\begin{document}
\maketitle 

 \begin{abstract}
The ``neural code'' is the way the brain characterizes, stores, and processes information. Unraveling the neural code is a key goal of mathematical neuroscience. Topology, coding theory, and, recently, commutative algebra are some the mathematical areas that are involved in analyzing these codes. Neural rings and ideals are algebraic objects that create a bridge between mathematical neuroscience and commutative algebra. A neural ideal is an ideal in a polynomial ring that encodes the combinatorial firing data of a neural code. Using some algebraic techniques one hopes to understand more about the structure of a neural code via neural rings and ideals. In this paper, we introduce an operation, called ``polarization,'' that allows us to relate neural ideals with squarefree monomial ideals,  which are very well studied and known for their nice behavior in commutative algebra. 
\end{abstract}



\section{Introduction}

 The well-known Stanley-Reisner correspondence relates simplicial complexes to squarefree monomial ideals.
 Many combinatorial properties of simplicial complexes are encoded in algebraic data related to the associated ideal. For example, Hochster's formula gives a relation between Betti numbers of the ideal and the dimensions of the homologies of the simplicial complex.
Recently, an extension of this correspondence has been studied. 
 Instead of simplicial complexes, which are particular collections of subsets of a set, one studies \emph{combinatorial codes}, also known as \emph{neural codes}, which are arbitrary collections of subsets of a set.
 One associates to each neural code a \emph{pseudomonomial ideal}, an ideal generated by products of distinct elements of the form $x_i$ and  $1-x_j$, called the \emph{neural ideal} of the code.
 Roughly, the neural ideal of a neural code is the ideal of a variety over $\FF_2$ whose points are in bijection with elements of the code, with the boolean relations removed.
 Combinatorial data from the code relate to algebraic data of the ideal. In particular, the primary decomposition and a distinguished generating set each relate to important combinatorial data about the code. However, neural ideals lack many of the desirable algebraic properties of Stanley-Reisner ideals. In particular, they are not graded nor realizable in a local setting.
	
	In this paper, we provide a technique to relate neural ideals to squarefree monomial ideals, a process that we call \emph{polarization}. We show that our notion of polarization behaves in a similar way to polarization of monomial ideals; in particular, ``depolarization'' is given by quotienting out by a regular sequence. We apply our notion of polarization to show that the na\"ive analogue of the Taylor resolution gives a free resolution of the neural ideal. We also apply this to define a meaningful substitute for the notion of minimal resolutions in this context. Minimal primes persist through polarization, but extra minimal primes may appear in the primary decomposition of the polarized ideal. We provide a description of the minimal primary decomposition of the polarized neural ideal in terms of basic data about the neural code.
	
	We conclude this introduction by describing the original motivation for neural ideals. The experimental work of O'Keefe \cite{O'Keefe} on freely moving rats yielded deep insights into how the brain encodes stimuli via neural activity. His experiments showed that individual neurons, the \emph{place cells}, would fire when the rat entered a specific area, the \emph{place field} of the neuron. Different place cells correspond to different place fields, and these place fields may overlap or cover other place fields. Thus, at any given point, some place cells will fire and some will not. Moreover, place fields are generally convex regions. Other experiments have shown that neurons detecting various other stimuli roughly fit into a similar framework \cite{orientation, cat1, cat2}.
		
		An important question is to determine how much information about the ambient space is encoded in neural firing data. A key first step in this direction is \cite{CI}, where it is shown that, under mild assumptions, the topology of the ambient space can be recovered from the simplicial complex
		consisting of sets $S$ of place cells such that there is a point where each neuron in $S$ fires. Later, in \cite{CIVY} the authors consider a finer combinatorial object---the collection of sets $S$ of place cells such that there is a point where each neuron and $S$ \emph{and no neuron not in $S$} fires. Nerual ideals and their connection to combinatorial codes are first studied there as a tool to understand the relationships between place fields.

\section{Background}\label{sec-2}
In this section we briefly introduce the study of neural rings and ideals, some basic properties of them. Then we describe the polarization operation of monomial ideals.
\subsection{Neural Rings}


Set $[n] = \{1,\dots,n\}$. A \textbf{combinatorial code} $\sC$ on $[n]$ is an arbitrary subset of  $\{0,1\}^{[n]} = \FF_2^{[n]}$. An element of a combinatorial code is a \textbf{codeword}. We identify a code with a subset of the powerset $2^{[n]}$ by identifying a codeword $\omega=(\omega_1,\dots,\omega_n) \in \{0,1\}^{[n]}$ with the subset $\{ i \in [n] \ | \ \omega_i =1 \}\subseteq [n]$. We define a combinatorial code on a subset on $[n]$ in the same way. In keeping with the neuroscience motivation and previous literature, we refer to elements of $[n]$ as \textbf{neurons}, and we call a combinatorial code a \textbf{neural code}.

Given a finite collection of subsets $\mathcal{U}=\{U_1,\dots,U_n\}$ of an ambient space $X$, the \textbf{code associated to the cover $\mathcal{U}$} is the collection of subsets $S\subseteq [n]$ such that there is some $p\in X$ with $p\in U_i$ for $i\in S$ and $p\notin U_j$ for $j \notin S$. In the Nobel Prize-winning work of O'Keefe \cite{O'Keefe}, experiments showed that in animals, individual hippocampus neurons would fire when the animal would pass through a fixed convex region in space, called the \textbf{place field} of that neuron. Then, the set of possible firing patterns---sets of neurons that can fire simultaneously---is the neural code associated to the collection of place fields in the space. 

There has been much recent work on determining which neural codes can arise in this way, and figuring out how to reconstruct data about the ambient space from this information; see \cite{CI,CIVY,Curto-Youngs,MRC,REU1,REU2}. To this end, the following tool was introduced.

\begin{definition}\cite{CIVY} The \textbf{neural ring} $R_\sC$ associated to the neural code $\sC$, is defined to be the quotient ring \[R_\sC = \FF_2[x_1,\dots,x_n] / I_\sC\] where
\[I_\sC = \langle f \in \FF_2[x_1,\dots,x_n]  \ \vert \ f(c) = 0 \ \text{for all} \ c\in \sC \rangle\] is an ideal in $\FF_2[x_1,\dots,x_n]$.
\end{definition}

Notice that the neural code $\sC$ can be realized as the variety $\V(I_\sC)\subseteq \FF_2^{[n]}=\{0,1\}^{[n]}$, and $I_{\sC}$ as the vanishing ideal of $\sC$.  For any neural code $\sC$, the ideal $I_\sC$ contains the ideal $\sB = \langle x_1^2-x_1,\dots, x_n^2-x_n\rangle $ generated by the Boolean relations, which are verified by any point in $\FF_2^{[n]}$. Then the ideal $I_\sC$ can be decomposed as
$I_\sC  = \sB + J_\sC$ where 
\[J_\sC = \Big\langle \prod\limits_{\{i \vert c_i = 1 \}} x_i \cdot\prod\limits_{\{i \vert c_i = 0 \}}(1-x_j) \ \Big| \ c \not\in \sC \Big\rangle \]
The ideal $J_\sC$ is said to be the \textbf{neural ideal} associated to the neural code~$\sC$.

\begin{example} \label{ex-rats} Consider the following configuration of three place fields in the plane, taken from \cite{Youngs2}:

\begin{center}
\begin{tikzpicture}
    \draw {(0,0) circle (1cm)} node at (-0.5,0.5) {$1$};
   \draw {(1,0) circle (1cm)} node at (1.5,0.5) {$2$};
    \draw {(0,0) circle (0.5cm)}  (-0.3, 0) -- (-1, -1) node [left] {$3$};
    \end{tikzpicture}
\end{center}

Any subset of the regions intersect, but because region 3 is contained in region 1, there is no point where neuron 3 can fire without neuron 1. The neural code of the above configuration is 
\[\sC = \{ \emptyset, \{1\}, \{2\},\{1,2\},\{1,3\},\{1,2,3\}\} = \{000, 100, 010, 110, 101, 111\},\] so  $001, 011 \not\in \sC$. Thus the corresponding neural ideal is
\[J_\sC =\langle (1-x_1)(1-x_2)x_3, (1-x_1)x_2x_3\rangle =\langle (1-x_1)x_3\rangle .\]
\end{example}

A \textbf{pseudomonomial} in a polynomial ring is a product
\[
\prod_{i\in\sigma}x_i\prod_{j\in\tau}(1-x_j) \in R
\]
where $\sigma, \tau \subseteq [n]$ and $\sigma\cap\tau = \emptyset$.  A \textbf{pseudomonomial ideal} is an ideal generated by a finite set of pseudomonomials; evidently, a neural ideal is a pseudonomial ideal. We say a pseudomonomial $f$ is \textbf{minimal} in $I$ if no proper divisor of $f$ is contained in $I$. Furthermore, if $\{f_1,\ldots,f_\ell\}$ is the set of \textit{all} minimal pseudomonomials in $I$, and we write $I$ in the following form,
\[
I = \langle f_1,\ldots, f_\ell\rangle
\]
we say that $I$ is in \textbf{canonical form}.

The canonical form  of an ideal encodes all of the intersection and covering information of a collection of sets. To wit:
\begin{theorem}\cite{CIVY} Let $\sC$ be the neural code associated to a collection of subsets $\mathcal{U}=\{U_1,\dots,U_n\}$ of a space $X$. Let $J_{\sC}=\langle f_1,\dots, f_t \rangle$ be the neural ideal of $\sC$ in canonical form. Then $\prod_{i \in \sigma} x_i \prod_{j\in \tau} (1-x_j)$ occurs as some $f_k$ if and only if
\[ \bigcap_{i\in \sigma} U_i \subseteq \bigcup_{j\in \tau} U_j \]
and if this containment fails if $\sigma$ or $\tau$ is replaced by a proper subset. Here, we interpret the union over the empty set to be the empty set, and the intersection over the empty set to be the ambient space $X$.
\end{theorem}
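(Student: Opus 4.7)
The plan is to establish two independent equivalences and then combine them. First I will prove that a pseudomonomial $f = \prod_{i\in\sigma}x_i\prod_{j\in\tau}(1-x_j)$ belongs to $J_\sC$ if and only if $f$ vanishes on every codeword $c \in \sC$. One direction is automatic since each generator of $J_\sC$ vanishes on $\sC$ by construction. For the converse, set $A = \{c \in \{0,1\}^{[n]} : \sigma \subseteq c,\ c\cap\tau = \emptyset\}$, so that $f(c) = 1$ for $c \in A$ and $f(c) = 0$ otherwise. The key step is the polynomial identity
\[
f \;=\; \sum_{c\in A} e_c, \qquad \text{where } e_c := \prod_{k\in c}x_k\prod_{k\notin c}(1-x_k),
\]
obtained by multiplying $f$ by $1 = \prod_{k\notin\sigma\cup\tau}\bigl(x_k + (1-x_k)\bigr)$ and expanding; each term of the expansion is indexed by a subset $S \subseteq [n]\setminus(\sigma\cup\tau)$ and equals $e_c$ for $c = \sigma \cup S \in A$. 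Since the generators of $J_\sC$ are exactly the $e_c$ with $c \notin \sC$, this identity shows that whenever $f$ vanishes on $\sC$ (equivalently, $A \cap \sC = \emptyset$), we have $f \in J_\sC$.

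Next I translate vanishing on $\sC$ into the stated geometric containment. By definition of the code associated to $\mathcal{U}$, the existence of a codeword $c \in \sC$ with $\sigma \subseteq c$ and $c \cap \tau = \emptyset$ amounts to the existence of a point $p \in X$ with $p \in U_i$ for all $i \in \sigma$ and $p \notin U_j$ for all $j \in \tau$. The absence of such a point is exactly the statement
\[
\bigcap_{i\in\sigma} U_i \;\subseteq\; \bigcup_{j\in\tau} U_j,
\]
using the stated conventions for empty intersections and unions. Combining the two equivalences gives the dictionary: $f \in J_\sC$ if and only if this containment holds.

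Finally, $f$ occurs as some $f_k$ in the canonical form exactly when $f \in J_\sC$ and no proper divisor of $f$ is in $J_\sC$, where a proper divisor corresponds to replacing $(\sigma,\tau)$ by some $(\sigma',\tau')$ with $\sigma'\subseteq\sigma$, $\tau'\subseteq\tau$, and $(\sigma',\tau')\neq(\sigma,\tau)$. Applying the dictionary to each such divisor converts minimality into: the containment holds for $(\sigma,\tau)$ but fails whenever $\sigma$ or $\tau$ is replaced by a proper subset (the two one-sided failure conditions imply the general proper-divisor failure since shrinking $\tau$ only shrinks the right-hand union). The main obstacle is the polynomial identity $f = \sum_{c\in A} e_c$: it is the only computation that is not formal, but it reduces to the telescoping expansion $1 = \prod_{k\notin\sigma\cup\tau}(x_k + (1-x_k))$.
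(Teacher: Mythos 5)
Your proof is correct and complete. The paper itself does not prove this theorem—it is stated as a citation to [CIVY]—so there is no in-paper argument to compare against, but your argument is a clean, self-contained derivation. The crux, the interpolation identity $f = \sum_{c\in A} e_c$ obtained by inserting $1 = \prod_{k\notin\sigma\cup\tau}(x_k + (1-x_k))$ and expanding, is exactly the right tool: it identifies membership of a pseudomonomial in $J_\sC$ with vanishing on $\sC$, which is then just a restatement of the cover condition. Your observation at the end—that failure under one-sided shrinking of $\sigma$ or $\tau$ implies failure for any simultaneous shrinking, because replacing $\sigma$ by $\sigma'\subsetneq\sigma$ enlarges the left side—is the small but necessary point reconciling the ``proper divisor'' notion of minimality with the ``proper subset of $\sigma$ or of $\tau$'' phrasing in the statement, and you handle it correctly.
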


To return to the example \ref{ex-rats}, the ideal $J_\sC$ has canonical form
\[
CF(J_\sC) = \langle (1-x_1)x_3 \rangle.
\]
This relation is showing that region 1 contains region 3.

For any $\alpha\in\{0,1,\ast\}^{[n]}$, we define \textbf{the interval associated to $\alpha$} as
\[ V_\alpha = \{ c \in  \{0,1\}^{[n]} \ | \ c_i=\alpha_i \ \text{for} \ \alpha_i\neq \ast  \}, \]
and \textbf{the pseudomonomial prime associated to $\alpha$} as
\[
\mathbf{p}_\alpha =\langle \{x_i \ | \ \alpha_i = 0\},\{1 - x_i \ | \ \alpha_i = 1\}\rangle .
\]

The interval associated to $\alpha$ is the Boolean interval in $\{0,1\}^{[n]}$ consisting of codewords that agree with $\alpha$ in the non-$\ast$ coordinates. One may think of $\ast$ as a ``wildcard'' coordinate in this construction. The pseudomonomial prime associated to $\alpha$ is the prime of $R$ that defines the variety of $V_{\alpha}$ in $\FF_2^n$ after adding the Boolean relations.

\begin{theorem}[CIVY \cite{CIVY}]\label{thm-PM-PD}
\[
J_\sC = \bigcap_{\{\alpha \, | \, V_\alpha \subseteq \sC\}} \mathbf{p}_\alpha.
\]
is the unique irredundant primary decomposition of $J_\sC$. In particular, a pseudomonomial ideal has a pseudomonomial primary decomposition.
\end{theorem}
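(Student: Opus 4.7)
My plan is to verify the two containments in the claimed decomposition and then address irredundancy and uniqueness.

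First, each $\mathbf{p}_\alpha$ is prime since $R/\mathbf{p}_\alpha \cong \FF_2[x_k : \alpha_k = \ast]$ is a polynomial ring, hence a domain. For the forward containment $J_\sC \subseteq \bigcap_{V_\alpha \subseteq \sC} \mathbf{p}_\alpha$, each generator $m_c = \prod_{c_i=1} x_i \prod_{c_j=0}(1-x_j)$ of $J_\sC$ (for $c \notin \sC$) is a product of pairwise coprime linear forms, so $m_c \in \mathbf{p}_\alpha$ iff some factor is a generator of $\mathbf{p}_\alpha$, iff $c$ and $\alpha$ disagree at some non-$\ast$ coordinate of $\alpha$, iff $c \notin V_\alpha$. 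For $V_\alpha \subseteq \sC$ and $c \notin \sC$ this is automatic, so $m_c \in \mathbf{p}_\alpha$.

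The reverse containment $\bigcap_{V_\alpha \subseteq \sC} \mathbf{p}_\alpha \subseteq J_\sC$ is the main obstacle. My strategy is to pass to the algebraic closure $\overline{\FF}_2$ and invoke Hilbert's Nullstellensatz. For a point $d$ in $\overline{\FF}_2^n$, set $B(d) = \{i : d_i \in \{0,1\}\}$; a linear factor of $m_c$ vanishes at $d$ iff $c$ and $d$ disagree at some $i \in B(d)$, so every $m_c$ (for $c \notin \sC$) vanishes at $d$ iff no $c \notin \sC$ agrees with $d$ on $B(d)$. Letting $\alpha(d)$ agree with $d$ on $B(d)$ and be $\ast$ elsewhere, this is exactly the condition $V_{\alpha(d)} \subseteq \sC$, giving the set-theoretic identity $\V(J_\sC) = \bigcup_{V_\alpha \subseteq \sC} \V(\mathbf{p}_\alpha)$ inside $\overline{\FF}_2^n$. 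The Nullstellensatz combined with faithful flatness of $R \hookrightarrow \overline{\FF}_2[x_1, \ldots, x_n]$ then yields $\sqrt{J_\sC} = \bigcap \mathbf{p}_\alpha$. To close the remaining gap I would prove that $J_\sC$ itself is radical, by induction on the number of its minimal pseudomonomial generators: the base case uses $\langle m\rangle = \bigcap_k \langle \ell_k\rangle$ (valid because the $\ell_k$ are pairwise coprime linear forms in a UFD), and the inductive step uses a colon-ideal argument exploiting the squarefree linear structure.

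For irredundancy, restrict the intersection to $\alpha$'s with $V_\alpha$ maximal among Boolean intervals contained in $\sC$: any non-maximal $\mathbf{p}_\alpha$ strictly contains some $\mathbf{p}_\beta$ in the intersection and is redundant. If $\bigcap_{\alpha' \neq \alpha} \mathbf{p}_{\alpha'} \subseteq \mathbf{p}_\alpha$ for a maximal $\alpha$, primality of $\mathbf{p}_\alpha$ forces some $\mathbf{p}_{\alpha'} \subseteq \mathbf{p}_\alpha$ with $V_{\alpha'} \supseteq V_\alpha$, contradicting maximality. Uniqueness is immediate because the remaining $\mathbf{p}_\alpha$'s are precisely the minimal primes of the radical ideal $J_\sC$, hence determined by the ideal alone.
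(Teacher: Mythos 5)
The paper does not prove this theorem---it is quoted from~\cite{CIVY}---so there is no in-paper proof to compare against, and your argument has to stand on its own. Your forward containment is correct: $\mathbf{p}_\alpha$ is prime, a pseudomonomial lies in it if and only if one of its linear factors does, and that unwinds to $c\notin V_\alpha$. The variety computation over $\overline{\FF}_2$ correctly identifies $\V(J_\sC)=\bigcup_{V_\alpha\subseteq\sC}\V(\mathbf{p}_\alpha)$, and the Nullstellensatz together with the base-change bookkeeping you indicate does yield $\sqrt{J_\sC}=\bigcap_{V_\alpha\subseteq\sC}\mathbf{p}_\alpha$. Your handling of irredundancy (restrict to maximal intervals; use primality of $\mathbf{p}_\alpha$, finiteness of the intersection, and the order-reversal between $V_\alpha$ and $\mathbf{p}_\alpha$) and of uniqueness (minimal primes of a radical ideal) is also sound, assuming the radicality you still owe.

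The genuine gap is exactly that assertion: that $J_\sC$ is radical. This is the entire remaining distance between $\sqrt{J_\sC}$ and $J_\sC$, and it is really the substance of the theorem; routing through the Nullstellensatz does not make it easier, it just relabels it. You acknowledge it needs proof but only gesture at an induction on the number of minimal pseudomonomial generators. The base case is fine: a single pseudomonomial $m$ is a product of pairwise coprime linear forms, so $\langle m\rangle=\bigcap_k\langle\ell_k\rangle$ is an intersection of primes. The inductive step, however, is not a routine deduction. Adding one more radical principal ideal to a radical ideal does not in general preserve radicality---already $\langle y\rangle+\langle y-x^2\rangle=\langle y,x^2\rangle$ fails---so the pseudomonomial structure has to enter in an essential way, and the phrase ``colon-ideal argument exploiting the squarefree linear structure'' is carrying the full weight of the theorem with no argument actually written down. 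Nothing in your sketch rules out that some canonical form $\langle f_1,\dots,f_{t-1}\rangle$ is radical while $\langle f_1,\dots,f_t\rangle$ is not. Note also that you cannot borrow the paper's polarization machinery to close this: Theorem~\ref{thm-reg-seq}, the regular-sequence statement that licenses depolarization, is itself proved by invoking Theorem~\ref{thm-PM-PD}, so that route would be circular. As it stands, the reverse containment is unestablished.
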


In above example \ref{ex-rats}, the primary decomposition of $J_\sC$ is
\[
J_\sC = \langle x_3 \rangle \cap \langle 1-x_1\rangle .
\]

\subsection{Polarization Of Monomial Ideals}

In study of monomial ideals, an operation called ``\textit{polarization}" is used to reduce the monomial ideals to the squarefree monomial ideals. The advantage of reducing the squarefree monomials is that one can use the Stanley-Reisner correspondence which provides a very nice correlation between squarefree monomial ideals and simplicial complexes.

Let $m = x_1^{a_1}\cdots x_n^{a_n}$ be a monomial in $R = K[x_1,\dots,x_n]$ where $a_i \geq 0$ for all $i=1,\dots,n$.
The \textbf{polarization of a monomial $m$} is defined as 
\[\mathcal{P}(m) = \prod\limits_{i=1}^{n} x_i y_{i2}\cdots y_{i a_i}\]
which is a squarefree monomial in an extended polynomial ring $$K[x_1, y_{12},\dots,y_{1 a_1}, x_2, y_{22},\dots,y_{2a_2},\dots,x_n, y_{n2},\dots,y_{n a_n}].$$
Note that if $a_i\leq 1$ for any $1\leq i \leq n$, so that $m$ is already squarefree then $\mathcal{P}(m) = m$.  

The \textbf{ polarization of a monomial ideal} $I = (m_1,\dots,m_r)$ is 
\[\mathcal{P}(I) = \langle \mathcal{P}(m_1),\dots, \mathcal{P}(m_r) \rangle ,\] 
and clearly $\mathcal{P}(I)$ lives in a larger ring $S = R[y_{ij}]$ where the number of variables $y_{ij}$, for $i=1,\dots,n$, depends on the maximum degrees of $x_i$ in all minimal generators $m_l$.
\begin{example} Let $I = \langle x_1^2x_2^2, x_2^4x_3\rangle $ be a monomial ideal in $R = K[x_1,x_2,x_3]$. Then its polarization is the ideal
$\mathcal{P}(I ) =  \langle x_1y_{12}x_2y_{22},  x_2y_{22}y_{23}y_{24}x_3 \rangle$
 inside the larger polynomial ring 
 \[S = R[y_{12},  y_{22},  y_{23}, y_{24}] = K[x_1,  y_{12},  x_2,  y_{22},  y_{23},  y_{24}, x_3].\]
\end{example} 

The reason that polarization of a monomial ideal $I =  \langle m_1,\dots,m_r \rangle$ preserves algebraic properties is based on the following fact. The monomial ring $R/I$ is the quotient of $S/\sP(I)$ by the ideal $D$ generated by the regular sequence  $\{ y_{j l} - x_{j}  \  \vert  \ 1 \leq j \leq n, 2\leq l \leq a_j \} $ where $ a_j$ is the maximum power of $x_j$ dividing some minimal generator $m_i$. We recall that a sequence $f_1,\dots,f_t$ of elements in a ring $R$ is a \textbf{regular sequence} if $f_i$ is a nonzerodivisor on $R/ \langle f_1,\dots,f_{i-1} \rangle$ for $1\leq i \leq t$. This passage from  $S/\sP(I)$ to  $R/I$ is called \textbf{depolarization}. Quotienting out by an ideal generated by a regular sequence, especially one generated by homogeneous forms, and hence depolarizing, preserves many algebraic properties. Note that if $x_j^2$ does not divide any minimal generator, there will be no corresponding $y_{jl}$ for $l\geq 2$. Then
one gets
\[ R / I \cong S /(\mathcal{P}(I) + D). \]


Many properties of squarefree polarized ideal $\mathcal{P}(I)$ transfer to the original ideal $I$. For example, 
\begin{itemize} 
\item The minimal free resolution of $R/I$ is obtained from the minimal free $S$-resolution of $S/ (\mathcal{P}(I) + D)$ by depolarization;
\end{itemize}
For given monomial ideals $I$ and $J$ in $R$,
\begin{itemize} 
\item $\mathcal{P}(I + J) = \mathcal{P}(I) + \mathcal{P}(J)$;
\item $\mathcal{P}(I)$ and $I$ are the same height;
\end{itemize}
For the proofs and more properties of polarization of monomial ideals see \cite{Faridi}.
The next theorem shows that we can verify the Cohen-Macaulayness of the monomial ideal using the squarefree monomial ideals via the polarization operation. 
 
\begin{theorem}[Fr\"oberg \cite{Froberg} ] For a given monomial ideal $I = \langle m_1,\dots,m_r \rangle$ in $R = K[x_1,\dots,x_n]$,
$R/I$ is Cohen-Macaulay (Gorenstein) if and only if  $S/ \mathcal{P}(I)$ is Cohen-Macaulay (Gorenstein) where $S = \mathcal{P}(R)$ as defined above.
\end{theorem}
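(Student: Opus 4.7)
The plan is to reduce the theorem to the standard preservation of Cohen-Macaulayness and Gorensteinness under quotients by regular sequences of positive-degree homogeneous forms. As already observed in the discussion preceding the statement, there is an isomorphism
\[
R/I \;\cong\; S\big/\bigl(\mathcal{P}(I) + D\bigr),
\]
where $D = \langle y_{jl} - x_j \mid 1 \le j \le n,\; 2 \le l \le a_j\rangle$ and the listed generators of $D$ form a regular sequence of linear forms on $S/\mathcal{P}(I)$. I would work throughout with the localizations of $R/I$ and $S/\mathcal{P}(I)$ at their unique graded maximal ideals, where the usual dimension and depth formulae for regular sequences apply directly.

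The Cohen-Macaulay half then follows immediately from the standard fact that if $\theta_1, \ldots, \theta_t$ is a regular sequence of positive-degree homogeneous forms on a finitely generated graded $K$-algebra $A$, then both $\dim A$ and $\dpt A$ drop by exactly $t$ when passing to $A/(\theta_1, \ldots, \theta_t)$. Setting $A = S/\mathcal{P}(I)$ and applying this with the generators of $D$, the equivalence $S/\mathcal{P}(I)$ CM $\Leftrightarrow$ $R/I$ CM is immediate.

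For the Gorenstein half I would invoke the companion fact that if $A$ is a Cohen-Macaulay graded-local ring and $\theta_1, \ldots, \theta_t$ is a regular sequence of homogeneous forms, then $A$ is Gorenstein if and only if $A/(\theta_1,\ldots,\theta_t)$ is Gorenstein. The cleanest argument passes through the canonical module: one has $\omega_{A/(\theta)} \cong \omega_A / (\theta)\omega_A$, so $\omega_A$ is free of rank one exactly when $\omega_{A/(\theta)}$ is. Alternatively, one can argue via the Cohen-Macaulay type, which is unchanged by quotient by a regular sequence inside a CM ring.

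The main obstacle in making the above rigorous is the verification that $\{y_{jl} - x_j\}$ really is a regular sequence on $S/\mathcal{P}(I)$, as opposed to merely on $S$. This is asserted before the theorem but warrants a direct check. I would argue by induction on the number of newly introduced variables $y_{jl}$, at each stage reducing to a single polarization step and then using the squarefree structure of $\mathcal{P}(I)$ to show that $y_{jl} - x_j$ avoids every associated prime of the intermediate quotient (since those associated primes are monomial primes in the remaining variables). Once this regularity is confirmed, the CM and Gorenstein equivalences follow with no further work.
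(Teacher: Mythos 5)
The paper does not actually prove this theorem; it is cited as background from Fr\"oberg (and the reader is pointed to Faridi's survey for proofs), so there is no ``paper's proof'' to compare against. That said, your strategy is the standard one in the literature and the outline is essentially correct: once one knows that the depolarization differences $\{y_{jl} - x_j\}$ form a regular sequence of linear forms on $S/\mathcal{P}(I)$, both the Cohen--Macaulay and Gorenstein equivalences follow from the usual dimension/depth bookkeeping and the invariance of Cohen--Macaulay type (or of freeness of the canonical module) under quotient by a regular sequence. Your identification of the regular-sequence verification as the crux is exactly right.

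However, the reason you give in the parenthetical for that verification is not actually sufficient, and this is a genuine gap. You argue that $y_{jl}-x_j$ avoids every associated prime of the intermediate quotient ``since those associated primes are monomial primes in the remaining variables.'' But a monomial prime $\mathfrak{p}$ certainly \emph{can} contain a difference of two distinct variables: $y_{jl}-x_j \in \mathfrak{p}$ precisely when both $y_{jl}\in\mathfrak{p}$ and $x_j\in\mathfrak{p}$. So monomiality of the associated primes alone proves nothing; what one actually needs is the structural fact that no associated prime of the partially depolarized ideal contains both $x_j$ and any of its shadow variables $y_{jl}$. This is true, but it is a nontrivial lemma requiring an argument that uses the specific ``consecutive chain'' shape of polarized generators (compare Herzog--Hibi, or the careful case analysis in the paper's own Theorem 3.6 for the pseudomonomial analogue, where this is exactly the content of the ``claim'' about the form of the minimal primes of $J'_t$). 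Without supplying that lemma your induction does not close; with it, the rest of your argument goes through.

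A secondary, minor point: after quotienting by $y_{jl}-x_j$, the image of $\mathcal{P}(I)$ is still monomial but in general is no longer squarefree, so one should not lean too heavily on ``the squarefree structure'' of $\mathcal{P}(I)$ when passing to the intermediate quotients; the argument needs to be phrased so that it applies to arbitrary (partially depolarized) monomial ideals.
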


\section{The Polarization Operation}

Let $R=\FF[x_1,\ldots, x_n]$ and $S = \FF[x_1,\ldots, x_n,y_1,\ldots, y_n]$. We want each $y_i$ to act as an alias for $1 - x_i$, and we encode this by defining the \textbf{depolarization ideal},
\[
D = \langle\{x_i + y_i - 1\,|\,i\in [n]\}\rangle,
\]
so that $S/D = R$. We denote the corresponding quotient map by $\pi : S \to S/D = R$, so that $\pi$ identifies $y_i$ and $1 - x_i$ for each $i$.

For a pseudomonomial $f = \prod_{i\in\sigma}x_i\prod_{j\in\tau}(1-x_j) \in R$, we define its \textbf{polarization} to be the squarefree monomial
\[
\sP(f) = \prod_{i\in\sigma}x_i\prod_{j\in\tau}y_j \in S.
\]

\begin{lemma}\label{lemma-divisiblity}
The polarization operation on pseudomonomials, $\sP$ has the property that if $f, g \in R$ are pseudomonomials then $f | g \Leftrightarrow\sP(f) | \sP(g)$ in $S$.
\end{lemma}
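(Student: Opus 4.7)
The plan is to reduce both sides of the biconditional to the same combinatorial condition on the supports. Write $f = \prod_{i \in \sigma_f} x_i \prod_{j \in \tau_f}(1-x_j)$ and $g = \prod_{i \in \sigma_g} x_i \prod_{j \in \tau_g}(1-x_j)$, with $\sigma_f \cap \tau_f = \sigma_g \cap \tau_g = \emptyset$ by definition of a pseudomonomial. I claim that each of the conditions $f \mid g$ in $R$ and $\sP(f) \mid \sP(g)$ in $S$ is equivalent to the pair of inclusions $\sigma_f \subseteq \sigma_g$ and $\tau_f \subseteq \tau_g$; the lemma follows immediately by transitivity.

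For the polarized side this is the standard fact that, in the UFD $S$, a squarefree monomial in the $x_i$'s and $y_j$'s divides another such monomial exactly when its variable support is contained in that of the other. Here $\sP(f) = \prod_{i\in\sigma_f} x_i \prod_{j\in\tau_f} y_j$ and analogously for $\sP(g)$; these are squarefree precisely because the pseudomonomial condition guarantees $\sigma \cap \tau = \emptyset$ in each case, so the support of $\sP(f)$ is $\sigma_f \sqcup \{y_j : j \in \tau_f\}$ (and similarly for $\sP(g)$), and containment of supports is exactly the claimed inclusion.

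For the unpolarized side, the key observation is that the $2n$ linear forms $x_1,\ldots,x_n, 1-x_1,\ldots,1-x_n$ are pairwise nonassociate primes in the UFD $R = \FF[x_1,\ldots,x_n]$. Each is irreducible as it is a nonzero degree-one polynomial; and no two are scalar multiples of each other, as checked by comparing constant terms and linear coefficients in the $2\times 2$ case. The pseudomonomial condition $\sigma \cap \tau = \emptyset$ ensures that the expressions for $f$ and $g$ are genuine prime factorizations (no variable is listed as both $x_i$ and $1-x_i$ in either factorization). Unique factorization in $R$ then yields $f \mid g$ if and only if $\sigma_f \subseteq \sigma_g$ and $\tau_f \subseteq \tau_g$, which matches the criterion from the polarized side.

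The main obstacle, and really the only nontrivial step, is verifying that $\{x_i\}_i \cup \{1-x_j\}_j$ is a system of pairwise nonassociate primes in $R$, so that divisibility of pseudomonomials is controlled by the combinatorial data $(\sigma,\tau)$ rather than by subtler polynomial identities. Once that is in place the equivalence is a direct application of unique factorization in a polynomial ring over a field, applied twice (to $R$ and to $S$) and compared.
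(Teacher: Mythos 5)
Your argument is correct and takes essentially the same route as the paper's: both reduce each of $f\mid g$ and $\sP(f)\mid\sP(g)$ to the single combinatorial condition $\sigma_f\subseteq\sigma_g$ and $\tau_f\subseteq\tau_g$ and then note these coincide. The paper simply asserts the equivalence ``$f\mid g$ iff $\sigma_f\subseteq\sigma_g$ and $\tau_f\subseteq\tau_g$'' without justification, whereas you supply the missing step by observing that $x_1,\dots,x_n,1-x_1,\dots,1-x_n$ are pairwise nonassociate primes in the UFD $R$ so that pseudomonomials have the expected squarefree prime factorizations; this is a welcome tightening of the same argument rather than a different one.
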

\begin{proof}
Let
\[
f = \prod_{i\in\sigma_f}x_i\prod_{j\in\tau_f}(1-x_j), \ \ g = \prod_{i\in\sigma_g}x_i\prod_{j\in\tau_g}(1-x_j).
\]
The condition that $f|g$, is equivalent to the condition that $\sigma_f \subset \sigma_g$ and $\tau_f \subset \tau_g$. The condition that $\sP(f) | \sP(g)$, that is, that
\[
\prod_{i\in\sigma_f}x_i\prod_{j\in\tau_f}y_j \text{ divides } \prod_{i\in\sigma_g}x_i\prod_{j\in\tau_g}y_j
\]
is the same. Thus, $f | g$ if and only if $\sP(f) | \sP(g)$.

\end{proof}

Let $I \subset R$ be a pseudomonomial ideal. As above, $I$ can be generated by the set of minimal pseudomonomials in $I$, or by the set of all pseudomonomials contained in $I$.

We define the polarization of $I$ by showing that these two characterizations  of $I$ are compatible with polarization in the following way.

\begin{theorem}\label{theorem-conditions-polarization}
For a pseudomonomial ideal $I = \langle f_1,\ldots, f_\ell\rangle \subset R$ in canonical form, and a squarefree monomial ideal $J \subset S$, the following are equivalent:
\begin{enumerate}
\item $J$ is the smallest ideal in $S$ such that, for any pseudomonomial $f$ in $R$, $f \in I$ if and only if $\sP(f) \in J$.
\item $J = \langle\sP(f_1),\ldots, \sP(f_\ell)\rangle \subset S$.
\end{enumerate}
\end{theorem}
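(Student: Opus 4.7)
The plan is to prove $(2) \Rightarrow (1)$ directly and deduce $(1) \Rightarrow (2)$ from the uniqueness of a smallest element. First I would set $J = \langle \sP(f_1), \ldots, \sP(f_\ell)\rangle$ and verify the biconditional: for every pseudomonomial $f \in R$, one has $f \in I$ if and only if $\sP(f) \in J$. For the forward direction, if $f \in I$ then $f$ is divisible by some minimal pseudomonomial of $I$, i.e.\ by some $f_k$ (a quick induction on the degree of $f$: if $f$ is not already minimal, replace it by a proper pseudomonomial divisor still lying in $I$ and iterate); Lemma~\ref{lemma-divisiblity} then gives $\sP(f_k) \mid \sP(f)$, hence $\sP(f) \in J$. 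For the reverse direction, $\sP(f)$ is a monomial in the monomial ideal $J$, so some generator $\sP(f_k)$ divides $\sP(f)$ by the standard divisibility characterization of monomial ideals; Lemma~\ref{lemma-divisiblity} applied in the other direction gives $f_k \mid f$, and thus $f \in I$.

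Next I would establish the minimality clause. Suppose $J' \subseteq S$ is any ideal satisfying the biconditional $f \in I \Leftrightarrow \sP(f) \in J'$ for all pseudomonomials $f$. Since each $f_k$ belongs to $I$, the biconditional forces $\sP(f_k) \in J'$ for every $k$, so $J = \langle \sP(f_1), \ldots, \sP(f_\ell)\rangle \subseteq J'$. Combined with the previous paragraph, this shows $J$ is the smallest ideal with the biconditional property, proving $(2) \Rightarrow (1)$.

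For the converse $(1) \Rightarrow (2)$, the smallest element of a collection of ideals (ordered by inclusion) is unique if it exists. Since the ideal from (2) has been shown to be a smallest element satisfying the biconditional, any ideal $J$ meeting condition (1) must coincide with $\langle \sP(f_1), \ldots, \sP(f_\ell)\rangle$.

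I do not foresee any real obstacle. The argument reduces to two ingredients: the equivalence $f \mid g \Leftrightarrow \sP(f) \mid \sP(g)$ from Lemma~\ref{lemma-divisiblity}, and the standard fact that a monomial lies in a monomial ideal exactly when one of the generating monomials divides it. The only subtlety worth mentioning is the use of the canonical form hypothesis in the forward direction, which guarantees that every pseudomonomial in $I$ has one of the listed $f_k$ as a divisor.
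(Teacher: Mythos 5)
Your proof is correct and uses the same core ingredients as the paper's: Lemma~\ref{lemma-divisiblity}, the fact that every pseudomonomial in $I$ has a minimal pseudomonomial divisor (whence a member of the canonical form divides it), and the standard criterion that a monomial lies in a monomial ideal exactly when some generator divides it. The one organizational difference is that you prove (2)$\Rightarrow$(1) in full---establishing both the biconditional and the minimality clause for $J = \langle\sP(f_1),\ldots,\sP(f_\ell)\rangle$---and then obtain (1)$\Rightarrow$(2) for free from uniqueness of a smallest element, whereas the paper argues the two implications separately. Your version is the cleaner logical flow: the paper's (1)$\Rightarrow$(2) opens by asserting $J_0 = \langle\{\sP(f) \mid f\in I \text{ pseudomonomial}\}\rangle$ without first verifying that this candidate ideal actually satisfies the biconditional (that verification only appears later, in the (2)$\Rightarrow$(1) direction), so the argument is mildly circular as written; your structure avoids that. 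The small induction-on-degree aside you include to justify that every pseudomonomial of $I$ has a minimal pseudomonomial factor is not in the paper (which treats it as immediate), but it is a correct and welcome elaboration.
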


\begin{proof}

\textbf{(1)$\Rightarrow$(2):}
Suppose that $J_0 \subset S$ is the smallest ideal in $S$ such that for every pseudomonomial $f \in R$, $f \in I \Leftrightarrow \sP(f) \in J_0$. Then 
\[J_0 = \langle \{\sP(f)\ | \ f \in I, f\text{ is a pseudomonomial}\}\rangle.\]
 Let $J = \langle\sP(f_1),\ldots, \sP(f_\ell)\rangle \subset S$, where the $f_i$ are the set of \textit{all} minimal pseudomonomials in $I$. Then it suffices to show that $J_0 = J$.

Obviously $J \subseteq J_0$ because the generators of $J$ are a subset of the generators of $J_0$.

Recall from Lemma~\ref{lemma-divisiblity} that $\sP$ preserves divisibility. Let $f \in I$ a pseudomonomial. Since $f$ is a pseudomonomial in $I$, some factor of $f$ is a minimal pseudomonomial in $I$. Denote it by $f_i$ so that $f_i | f$. Then $\sP(f_i) | \sP(f)$, so $\sP(f_i) | \sP(f)$, and $\sP(f) \in J$. Since $f$ was an arbitrary pseudomonomial in $I$, this shows that every generator of $J_0$ is contained in $J$ and $J_0 \subseteq J$. Therefore, $J=J_0$.

\textbf{(2)$\Rightarrow$(1):}
Let $J = \langle \sP(f_1),\ldots, \sP(f_\ell)\rangle \subset S$, where the $f_i$ are the set of \textit{all} minimal pseudomonomials in $I$. We need to show that $J$ is the smallest ideal in $S$ such that for every pseudomonomial $f \in R$, $f \in I \Leftrightarrow \sP(f) \in J$.

First we show that indeed, for every pseudomonomial $f \in R$, $f \in I \Leftrightarrow \sP(f) \in J$. From the above, if a pseudomonomial $f \in I$, then $\sP(f) \in J$. Suppose $f$ is a pseudomonomial not in $I$. Suppose, for the sake of contradiction, that $\sP(f) \in J$. Then, since $J$ is a monomial ideal $\sP(f)$ is a monomial, some generator $\sP(f_i)$ of $J$ divides $\sP(f)$. But by Lemma~\ref{lemma-divisiblity}, if $\sP(f_i) | \sP(f)$, then $f_i | f$, so $f \in I$, a contradiction. This concludes the proof.

\end{proof}

Now that we have shown that these two definitions produce the same ideal, we can use them equivalently to refer to a well-defined polarization $\sP(I)$ of any pseudomonomial ideal $I \subset R$.

\begin{definition} For a psuedomonomial ideal $I$, we define {\bf the polarization of $I$} to be the squarefree monomial ideal specified by the equivalent conditions of Theorem~\ref{theorem-conditions-polarization}.
\end{definition}

\begin{proposition}  \label{prop-primes} Let $I$ be a pseudomonomial ideal in canonical form, and $\mathbf{p}$ a pseudomonomial prime. 
\begin{itemize}
\item[(i)] $I \subseteq \mathbf{p}$ if and only if $\sP(I) \subseteq \sP(\mathbf{p})$. Moreover, $\sP(\mathbf{p})$ is a prime generated by a subset of the variables.
\item[(ii)] $I \subseteq \mathbf{p}$ if and only if every member of the canonical form is divisible by a generator of the pseudomonomial prime $\mathbf{p}$.
\end{itemize}
\end{proposition}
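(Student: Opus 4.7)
The plan is to reduce both parts to a single combinatorial observation about how a pseudomonomial can sit inside a pseudomonomial prime. I would start by writing $\mathbf{p} = \mathbf{p}_\alpha$ explicitly for some $\alpha \in \{0,1,\ast\}^{[n]}$, so its generators are $\{x_i : \alpha_i = 0\} \cup \{1-x_j : \alpha_j = 1\}$. Applying $\sP$ term-by-term to these generators gives $\sP(\mathbf{p}_\alpha) = \langle \{x_i : \alpha_i = 0\} \cup \{y_j : \alpha_j = 1\}\rangle$, which is visibly generated by a subset of the variables of $S$, hence is prime. This takes care of the second sentence of (i).

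Next I would establish the key divisibility criterion underlying both (i) and (ii): for any pseudomonomial $f = \prod_{i\in\sigma}x_i\prod_{j\in\tau}(1-x_j)$,
\[
f \in \mathbf{p}_\alpha \quad\Longleftrightarrow\quad \text{some generator of } \mathbf{p}_\alpha \text{ divides } f.
\]
The ``$\Leftarrow$'' direction is obvious. For ``$\Rightarrow$'', since $\mathbf{p}_\alpha$ is prime, one of the linear factors $x_i$ ($i\in\sigma$) or $(1-x_j)$ ($j\in\tau$) must lie in $\mathbf{p}_\alpha$. A direct check shows that $x_i\in \mathbf{p}_\alpha$ forces $\alpha_i = 0$ (if $\alpha_i = 1$ then $x_i \equiv 1\pmod{\mathbf{p}_\alpha}$; if $\alpha_i = \ast$ then $x_i$ is independent of the generators of $\mathbf{p}_\alpha$), and similarly $(1-x_j)\in\mathbf{p}_\alpha$ forces $\alpha_j = 1$. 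In either case the offending factor is itself a generator of $\mathbf{p}_\alpha$. The analogous statement for $\sP(f)$ and $\sP(\mathbf{p}_\alpha)$ is the standard fact that a monomial lies in a monomial ideal iff some generator divides it.

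With this in hand, both statements follow quickly. For (ii), simply note that $I \subseteq \mathbf{p}$ iff every canonical generator $f_k$ lies in $\mathbf{p}$, which by the criterion above is iff every $f_k$ is divisible by a generator of $\mathbf{p}$. For (i), observe that by construction of $\sP(\mathbf{p}_\alpha)$, a generator $x_i$ (resp.\ $1-x_j$) of $\mathbf{p}_\alpha$ divides $f_k$ exactly when the corresponding variable $x_i$ (resp.\ $y_j$), which is a generator of $\sP(\mathbf{p}_\alpha)$, divides $\sP(f_k)$. Hence $f_k \in \mathbf{p}$ iff $\sP(f_k)\in\sP(\mathbf{p})$, and running over all $k$ gives $I \subseteq \mathbf{p}$ iff $\sP(I) \subseteq \sP(\mathbf{p})$.

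The only step requiring any care is the claim that $x_i \in \mathbf{p}_\alpha \Rightarrow \alpha_i = 0$ (and the analogous statement for $1-x_j$); the rest is bookkeeping compatible with Lemma~\ref{lemma-divisiblity}. I would either argue this by evaluating at a point of the variety $V_\alpha$ where the $i$-th coordinate is not $0$, or more directly by noting that $\mathbf{p}_\alpha$ is generated by part of a set of $F$-linearly independent linear forms in $\{x_1,\ldots,x_n, 1-x_1,\ldots,1-x_n\}$, so the only linear forms of this shape lying in $\mathbf{p}_\alpha$ are the stated generators.
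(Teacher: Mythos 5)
Your proof is correct, and it takes a genuinely different route from the paper's. The paper observes that a pseudomonomial prime $\mathbf{p}$ is itself in canonical form (its degree-one generators are exactly its minimal pseudomonomials) and invokes Theorem~\ref{theorem-conditions-polarization} with $\mathbf{p}$ in place of $I$ to get $f_k \in \mathbf{p}$ iff $\sP(f_k) \in \sP(\mathbf{p})$, establishing part~(i); part~(ii) is then deduced from part~(i) together with Lemma~\ref{lemma-divisiblity} and the standard fact that a monomial ideal lies inside a monomial prime exactly when every generator of the ideal is divisible by some generator of the prime. You instead prove, from scratch, the self-contained criterion that a pseudomonomial $f$ belongs to $\mathbf{p}_\alpha$ iff some generator of $\mathbf{p}_\alpha$ divides $f$, using primality of $\mathbf{p}_\alpha$ to locate a linear factor of $f$ inside it and a direct check that $x_i \in \mathbf{p}_\alpha$ forces $\alpha_i = 0$ (resp.\ $1-x_j \in \mathbf{p}_\alpha$ forces $\alpha_j = 1$). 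That criterion gives part~(ii) immediately, and then part~(i) via Lemma~\ref{lemma-divisiblity}---so you effectively run the paper's chain of deductions in the opposite order. What you buy is independence from Theorem~\ref{theorem-conditions-polarization} and an explicit explanation of why membership of a pseudomonomial in a pseudomonomial prime reduces to divisibility, a point the paper leaves implicit inside its cited theorem; what the paper buys is brevity by reusing machinery already in place.
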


\begin{proof} Let $I =\langle f_1,\dots,f_r \rangle$ be in canonical form. By Theorem \ref{theorem-conditions-polarization}, $\sP(I) = \langle \sP(f_1),\dots,\sP(f_r) \rangle$. On the other hand, $I\subset \mathbf{p}$ so $f_i \in \mathbf{p}$. Then similarly $f_i\in \mathbf{p}$ if and only if $\sP(f_i)  \in \sP(\mathbf{p})$. Furthermore, by definition $\mathbf{p}$ is generated by $x_i$ or $1-x_j$ for some $i,j \in [n]$, $i\neq j$ , so polarization simply replaces $1-x_j$ by $y_j$ so $\sP(\mathbf{p})$ is a prime generated by the corresponding variables $x_i$ and $y_j$ for $i,j \in [n]$, $i\neq j$. This concludes part (i).

Part (ii) follows by part (i), Lemma~\ref{lemma-divisiblity} and the fact that a monomial ideal is contained in a monomial prime if and only if every generator of the monomial ideal is divisible by a generator of the monomial prime. 
 \end{proof}
 
To show that the operation of polarization preserves algebraic properties, we will use the following theorem.

\begin{theorem}\label{thm-reg-seq}
Let $J_\sC\subseteq R$ be a neural ideal. The sequence 
\[x_1 + y_1 - 1,\ldots, x_n + y_n - 1\]
 is a regular sequence on $S/\sP(J_\sC)$.
\end{theorem}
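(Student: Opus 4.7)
The plan is to proceed by induction on $k$, showing that for each $k=0,1,\ldots,n-1$, the element $x_{k+1}+y_{k+1}-1$ is a non-zerodivisor on $S/(\sP(J_\sC)+(x_1+y_1-1,\ldots,x_k+y_k-1))$. After applying the substitution $y_i\mapsto 1-x_i$ for $i\leq k$, this quotient is isomorphic to $T_k/J_k$, where $T_k=\FF[x_1,\ldots,x_n,y_{k+1},\ldots,y_n]$ and $J_k$ is generated by the partially depolarized elements $g_{l,k}:=\prod_{i\in\sigma_l}x_i\prod_{j\in\tau_l\cap[k]}(1-x_j)\prod_{j\in\tau_l\setminus[k]}y_j$ coming from the canonical form $\{f_l\}$ of $J_\sC$. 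Since $\sigma_l\cap\tau_l=\emptyset$, each $g_{l,k}$ is a pseudomonomial in $T_k$ (viewing both the $x_i$ and the $y_j$ as ordinary variables of this polynomial ring), so $J_k$ is itself a pseudomonomial ideal.

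The idea is then to leverage Theorem~\ref{thm-PM-PD} applied to $J_k$, which yields a unique irredundant pseudomonomial primary decomposition $J_k=\bigcap_\beta \mathbf{q}_\beta$. The $\mathbf{q}_\beta$ are exactly the associated primes of $J_k$ in $T_k$, so it suffices to show $x_{k+1}+y_{k+1}-1\notin\mathbf{q}_\beta$ for every $\beta$ appearing. Recognizing each $\mathbf{q}_\beta$ as the vanishing ideal of the affine subspace $V_\beta$ cut out by the specification $\beta$, a quick case check on $(\beta_{x_{k+1}},\beta_{y_{k+1}})\in\{0,1,\ast\}^2$ shows that $x_{k+1}+y_{k+1}-1$ vanishes identically on $V_\beta$ only in the two cases $(\beta_{x_{k+1}},\beta_{y_{k+1}})=(0,1)$ or $(1,0)$.

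The main obstacle is to rule out these two exceptional cases for any $\mathbf{q}_\beta$ appearing in the irredundant decomposition, and I plan to handle it by contradiction. In the $(0,1)$ case, let $\beta'$ be obtained from $\beta$ by changing $\beta_{y_{k+1}}=1$ to $\beta'_{y_{k+1}}=\ast$, so that $\mathbf{q}_{\beta'}\subsetneq\mathbf{q}_\beta$ by deletion of the generator $1-y_{k+1}$. The key observation is that $1-y_{k+1}$ never occurs as a factor of any $g_{l,k}$, since the only factor of $g_{l,k}$ involving $y_{k+1}$ at all is $y_{k+1}$ itself (occurring when $k+1\in\tau_l$). Hence the factor of $g_{l,k}$ witnessing $g_{l,k}\in\mathbf{q}_\beta$ already lies in $\mathbf{q}_{\beta'}$, giving $J_k\subseteq\mathbf{q}_{\beta'}\subsetneq\mathbf{q}_\beta$ and contradicting the irredundancy of the decomposition. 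The case $(1,0)$ is handled symmetrically by removing $1-x_{k+1}$, using that factors of the form $1-x_j$ appear in $g_{l,k}$ only for $j\in\tau_l\cap[k]$, while $k+1>k$. With these cases excluded, $x_{k+1}+y_{k+1}-1$ lies outside every associated prime of $J_k$, which closes the induction step and proves the theorem.
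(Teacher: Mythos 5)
Your proof is correct and follows essentially the same strategy as the paper's: reduce to a partial depolarization, which is again a pseudomonomial ideal, apply the pseudomonomial primary decomposition of Theorem~\ref{thm-PM-PD}, and use the observation (Proposition~\ref{prop-primes}) that a generator of a minimal pseudomonomial prime must divide a generator of the ideal to rule out the exceptional cases. The only cosmetic difference is that the paper runs the induction in reverse (quotienting by $x_{t+1}+y_{t+1}-1,\ldots,x_n+y_n-1$ first) and characterizes all possible generators of the minimal primes at once, while you go in forward order and target only the two problematic cases $(0,1)$ and $(1,0)$; the underlying argument is identical.
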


\begin{proof}
The statement is equivalent to the claim that for all $1\leq t \leq n$, the image of the $x_t + y_t - 1$ is a nonzerodivisor on $S/J_t$, where
\[
J_t = D_t + \sP(J_\sC), \text{ and } D_t = ( x_{t+1} + y_{t+1} - 1, \ldots, x_n + y_n - 1).
\]
We observe first that under the map $D_t\subseteq S$, the image $J'_t$ of $J_t$ is a pseudomonomial ideal, namely the ideal obtained from $\sP(J_\sC)$ by replacing the $y_i$ variables by $1-x_i$ for $i>t$. By Theorem~\ref{thm-PM-PD}, each minimal prime of the ideal $J'_t$ in $S' = R[y_1,\dots,y_t]$ is generated by irreducible pseudomonomial elements; that is $x$'s, $y$'s, $(1-x)$'s, and $(1-y)$'s.

We claim that each minimal pseudomonomial prime of $J'_t$ is generated by elements of the form $x_i$ with $1\leq i \leq n$, $1-x_j$ with $j>t$, or $y_k$ with $k\leq t$. 
By Proposition~\ref{prop-primes}, if a pseudomonomial prime over a pseudomonomial ideal has a generator that does not divide any element, that generator can be removed and the resulting prime still contains the specified ideal. Now, since $J'_t$ is generated by pseudomonomials that are multiples of $x_i$, $1-x_j$ with $j>t$, and $y_k$ with $k\leq t$, the claim follows.

As a consequence of the claim, $x_t+y_t-1$ is a nonzerodivisor on $S'/J'_t$, since it cannot be contained in a prime generated by elements of the form $x_i$, $1-x_j$ with $j>t$, and $y_k$ with $k\leq t$. Then the theorem follows by the isomorphism $S/J_t\cong S'/J'_t$.
\end{proof}

\section{Free Resolutions and Cohen-Macaulayness}

In this section, we apply Theorem~\ref{thm-reg-seq} to determine some algebraic properties of neural ideals. We first consider free resolutions.

Let $A$ be a ring, $I$ an ideal, and $f_1,\dots,f_t$ is a sequence of elements of $A$ whose images form a regular sequence on $A/I$. Let $P_{\bullet}$ be a free resolution of $A/I$ as an $A$-module. The homology of $P_{\bullet} \otimes_A A/ \langle f_1,\dots,f_t \rangle$ can be computed by $\mathrm{Tor}^A_{\bullet}(A/I , A/ \langle f_1,\dots,f_t \rangle )$, which vanishes by the assumption that the images of the $f$'s form a regular sequence on $A/I$. Thus, $P_{\bullet} \otimes_A A/\langle f_1,\dots,f_t\rangle$ gives a free resolution of $A/(I+\langle f_1,\dots,f_t \rangle)$ as an $A/I$-module. The following corollary then follows immediately from Theorem~\ref{thm-reg-seq}.

\begin{corollary}\label{cor-free-res} Let $\sC$ be a code, $J_\sC\subseteq R$ its neural ideal, and $\sP(J_\sC)\subseteq S$ its polarization. Given a free resolution $P_\bullet$ of the squarefree monomial ideal $\sP(J_\sC)$, the complex $P_\bullet \otimes_S S/D$ is a free resolution of $J_\sC$.
\end{corollary}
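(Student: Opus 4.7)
The strategy is to apply verbatim the Tor-vanishing principle articulated in the paragraph immediately preceding the corollary, with the ambient ring taken to be $S$, the ideal $I$ taken to be $\sP(J_\sC)$, and the regular sequence taken to be $x_1+y_1-1,\dots,x_n+y_n-1$, the generating set of $D$. The essential input is Theorem~\ref{thm-reg-seq}, which asserts that this sequence is regular on $S/\sP(J_\sC)$; the sequence is also regular on $S$ itself, being a collection of algebraically independent linear forms, which legitimizes computing $\Torr{\bullet}{S}{S/\sP(J_\sC)}{S/D}$ via the Koszul resolution of $S/D$ (and also gives $S/D \cong R$ as already noted).

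Given a free $S$-resolution $P_\bullet$ of $S/\sP(J_\sC)$, the homology of $P_\bullet\otimes_S S/D$ computes $\Torr{\bullet}{S}{S/\sP(J_\sC)}{S/D}$. By the regular-sequence hypothesis these higher Tor modules vanish, so $P_\bullet\otimes_S S/D$ is acyclic in positive degrees, with zeroth homology $S/\sP(J_\sC)\otimes_S S/D \cong S/(\sP(J_\sC)+D)$. The ring isomorphism $\pi: S/D \xrightarrow{\sim} R$, $y_j\mapsto 1-x_j$, sends each polarized generator $\sP(f_k)$ to the original pseudomonomial generator $f_k$ of $J_\sC$ (by the very definition of $\sP$), and therefore identifies $S/(\sP(J_\sC)+D)$ with $R/J_\sC$. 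Since each $P_i\otimes_S S/D$ is free over $S/D\cong R$, the complex $P_\bullet\otimes_S S/D$ is an $R$-free resolution of $R/J_\sC$; truncating away the augmentation gives a resolution of $J_\sC$ itself, as stated.

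There is no substantive obstacle here: Theorem~\ref{thm-reg-seq} has already done all the real work, and the remainder is a one-line application of the homological argument quoted just before the statement. The only point requiring care is bookkeeping, namely keeping straight the passage between $S$-modules annihilated by $D$ and $R$-modules via $S/D\cong R$, and verifying that freeness over $S$ descends to freeness over $R$ after tensoring with $S/D$ — both of which are immediate from the corresponding properties of $P_\bullet$.
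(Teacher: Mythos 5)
Your proof is correct and follows the same route as the paper: apply the Tor-vanishing paragraph immediately preceding the corollary with $A=S$, $I=\sP(J_\sC)$, and the regular sequence generating $D$, using Theorem~\ref{thm-reg-seq} as the key input. You are in fact a bit more explicit than the paper in flagging that the sequence is regular on $S$ itself (which justifies the Koszul computation of $\Tor$) and in identifying the zeroth homology $S/(\sP(J_\sC)+D)$ with $R/J_\sC$ via $\pi$, but these are exactly the unstated steps the paper's ``follows immediately'' is glossing.
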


 Concretely, the complex $P_\bullet \otimes_S S/D$ is the complex of free $R$-modules obtained from $P_\bullet$ by replacing $y_i$ by $1-x_i$ in each of the matrices.

We single out two special types of free resolutions $P_\bullet$ of the squarefree monomial ideals $\sP(J_\sC)$. First, one has the Taylor complex, which is easy to construct, but rarely minimal. Corollary~\ref{cor-free-res} indicates that an analogue of the Taylor complex provides a free resolution for $J_{\sC}$.

\begin{definition} We define the \textbf{Taylor resolution} of the neural ideal $J_\sC$ as follows. Let $CF(J_{\sC})=\langle a_1,\dots,a_t \rangle$. For a subset $H\subseteq \{1,\dots,t\}$, let $M_H=\mathrm{lcm}\{ a_i \ | \ i\in H\}$. For $0\leq i \leq t$, let $F_i=\oplus_{|H|=i} R \cdot e_H$ be the free module generated by the symbols $e_H$ for all $H\subseteq \{1,\dots,t\}$ of cardinality $i$. Let $d_i:F_i\rightarrow F_{i-1}$ be the $R$-linear map such that 
\[d_i(e_H)=\sum_{h\in H} \varepsilon(H,h) \frac{M_H}{M_{H -\{h\}}} e_{H -\{h\}}, \ \text{where} \ \varepsilon(H,h)=(-1)^{\# \{ j\in H \ | \ j<h \}} .\] 
\end{definition}

\begin{proposition} The Taylor resolution of a neural ideal $J_\sC \subset R$ is a free resolution of $R/J_{\sC}$ as an $R$-module.
\end{proposition}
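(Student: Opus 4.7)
The plan is to obtain this resolution by applying Corollary~\ref{cor-free-res} to the Taylor resolution of the polarized ideal $\sP(J_\sC) \subset S$, and then to check that the resulting depolarized complex is precisely the one written down in the preceding definition.

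First I would set the stage as follows. Let $CF(J_\sC)=\langle a_1,\ldots,a_t\rangle$ be the canonical form, so that by Theorem~\ref{theorem-conditions-polarization} the squarefree monomial ideal $\sP(J_\sC) = \langle \sP(a_1),\ldots,\sP(a_t)\rangle$ in $S$ has $\sP(a_1),\ldots,\sP(a_t)$ as a generating set. Since $\sP(J_\sC)$ is a monomial ideal in $S$, the classical Taylor complex $T_\bullet$ built from these generators is a (generally non-minimal) free resolution of $S/\sP(J_\sC)$ as an $S$-module. Explicitly, $T_i = \bigoplus_{|H|=i} S\cdot e_H$ and its differential is
\[
\partial_i(e_H) = \sum_{h\in H}\varepsilon(H,h)\,\frac{m_H}{m_{H-\{h\}}}\,e_{H-\{h\}},\qquad m_H := \mathrm{lcm}\{\sP(a_j)\mid j\in H\}.
\]

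Next I would invoke the machinery: by Theorem~\ref{thm-reg-seq} the sequence $x_1+y_1-1,\ldots,x_n+y_n-1$ is regular on $S/\sP(J_\sC)$, so Corollary~\ref{cor-free-res} applies and $T_\bullet \otimes_S S/D$ is a complex of free $R$-modules resolving $R/J_\sC$. This already gives a free resolution; it remains only to identify it with the complex described in the definition.

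The matching is the main content, and the only subtlety is in reading the LCM of pseudomonomials correctly. I would interpret $M_H = \mathrm{lcm}\{a_j\mid j\in H\}$ as the formal ``squarefree'' product
\[
M_H \;=\; \prod_{i\in\bigcup_{j\in H}\sigma_j} x_i\;\prod_{k\in\bigcup_{j\in H}\tau_j}(1-x_k),
\]
where $a_j = \prod_{i\in\sigma_j}x_i\prod_{k\in\tau_j}(1-x_k)$; this is precisely the image of $m_H$ under the depolarization map $\pi\colon S\to R$ sending $y_k\mapsto 1-x_k$. Because each $m_H$ is squarefree in $S$, each ratio $m_H/m_{H-\{h\}}$ is a squarefree monomial in $S$, and $\pi$ sends it to the well-defined product of $x_i$'s and $(1-x_k)$'s that appears as $M_H/M_{H-\{h\}}$ in the definition. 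The free module $T_i\otimes_S S/D$ has the $R$-basis $\{e_H : |H|=i\}$, and the signs $\varepsilon(H,h)$ survive tensoring unchanged, so the differential of $T_\bullet\otimes_S S/D$ agrees term-by-term with the $d_i$ in the definition. Combining this identification with the previous paragraph yields the proposition.

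The main obstacle, and really the only nontrivial check, is making precise the meaning of $\mathrm{lcm}\{a_j\mid j\in H\}$ and the quotients $M_H/M_{H-\{h\}}$ when the $a_j$ are pseudomonomials rather than monomials; once one views these as formal products in the ``variables'' $x_i$ and $1-x_k$, the depolarization map behaves well with respect to both LCM and division, so the identification is automatic. Everything else is the classical theory of the Taylor complex together with Corollary~\ref{cor-free-res}.
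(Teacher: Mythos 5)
Your proposal is correct and takes essentially the same route as the paper, which simply cites Corollary~\ref{cor-free-res} and observes that depolarizing the classical Taylor resolution of $\sP(J_\sC)$ yields the complex defined as the Taylor resolution of $J_\sC$. You spell out the one detail the paper leaves implicit — that the ``lcm'' of pseudomonomials $M_H$ should be read as $\pi(m_H)$, the depolarization of the genuine lcm of the polarized generators, so the differentials match term by term — which is a worthwhile clarification but not a departure in method.
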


\begin{proof} This is an immediate consequence of Corollary~\ref{cor-free-res}, because the Taylor resolution of $R/J_{\sC}$ is the result of taking the Taylor resolution of $\sP(J_\sC)$ and tensoring with the quotient by the depolarization ideal.
\end{proof}

\begin{example} \label{ex-Taylor-res} 
 Let $\sC = \{000, 100, 111\}$; this is the code A18 in \cite{CIVY}. The neural ideal $J_\sC$ is $\langle x_2(1-x_1), x_2(1-x_3), x_3(1-x_1), x_3(1-x_2)\rangle$ in ${R= \FF_2[x_1,x_2,x_3]}$, thus its polarization ${\sP(J_\sC) = \langle x_2y_1, x_2y_3, x_3y_1, x_3y_2\rangle}$ in ${S= \FF_2[x_1,x_2,x_3,y_1,y_2,y_3]}$. The following is the Taylor resolution of $S/\sP(J_\sC)$;
\[\minCDarrowwidth25pt\begin{CD}
S @<\text{$\tiny \begin{bmatrix} x_2y_1& x_2y_3&  x_3y_1& x_3y_2\end{bmatrix}$}<< S^4 @<d_2<< S^6 @<d_3<<  S^4@< \text{$\tiny\begin{bmatrix} y_3 \\ y_2 \\ -1\\ -1\end{bmatrix}$}<< S @<<< 0
\end{CD}
\]
where $d_2 =\small \begin{bmatrix} -y_3 & -x_3 & -x_3y_2& 0 & 0 &0 \\ y_1 & 0 & 0 & -x_3y_1 & -x_3y_2 & 0\\ 0 &x_2 & 0 & x_2y_3 & 0 & -y_2\\ 0 & 0 & x_2y_1& 0 & x_2y_3 & y_1 
\end{bmatrix}$ and $d_3 = \small \begin{bmatrix} 0 & x_3&x_3y_2 & 0\\ y_2&-y_3&0&0 \\ -1&0&-y_3&0 \\  0&1&0&y_2 \\ 0&0&y_1&-y_1 \\ x_2&0&0&x_2y_3 \end{bmatrix}$.

Then replacing each $y_i$ by $1-x_i$ in the entries of matrices gives the Taylor resolution for the neural ideal $J_\sC$ over $R$.
\end{example}

The other free resolution of $\sP(J_\sC)$ that we concern ourselves with is its minimal resolution. Neural ideals are not homogeneous, so there is no notion of minimal resolution for these ideals. However, Corollary~\ref{cor-free-res}  allows us to define a natural substitute.

\begin{definition} We define the \textbf{canonical resolution} of the neural ideal $J_\sC$ as the complex $P_{\bullet} \otimes_S  S/D$, where $P_{\bullet}$ is the minimal resolution of $\sP(J_\sC)$.\end{definition}

\begin{example} Consider the same neural ideal $J_\sC$ and its polarization $\sP(J_\sC)$ in Example \ref{ex-Taylor-res}. The minimal free resolution of  $S/\sP(J_\sC)$ is 
\[\minCDarrowwidth10pt\begin{CD}
 S @<[x_2y_1\ x_2y_3\  x_3y_1\ x_3y_2]<< S^4 @<\text{$\tiny \begin{bmatrix} -y_3 & -x_3 & 0 & 0 \\ y_1 & 0 & x_3y_2&0\\ 0 & x_2 & 0 & y_2\\ 0 &0 & x_2y_3 & y_1 
\end{bmatrix}$}<< S^4 @< \text{$\begin{bmatrix} x_3y_2 \\ -y_2y_3 \\ y_1\\ -x_2y_3\end{bmatrix}$}<<  S @<<< 0
\end{CD}
\]
Then tensoring the above minimal resolution with the quotient by the depolarization ideal gives the canonical resolution of the neural ideal $J_\sC$ over $R$;
\[\minCDarrowwidth10pt\begin{CD}
 0 @<<< R/J_{\sC} @<<< R @<d_1<< R^4 @<d_2<< R^4 @< d_3<<  R @<<< 0
\end{CD}
\]
where
 
 \[d_1 = \small\begin{bmatrix} x_2(1-x_1)& x_2(1-x_3)&  x_3(1-x_1)& x_3(1-x_2)\end{bmatrix},\]
 \[d_2 =\small \begin{bmatrix} -(1-x_3) & -x_3 & 0 & 0 \\ 1-x_1 & 0 & -x_3(1-x_2)&0\\ 0 & x_2 & 0 & -(1-x_2)\\ 0 &0 & x_2y_3 & (1-x_1) \end{bmatrix},\] 
 and 
\[d_3 =\small\begin{bmatrix} x_3(1-x_2) \\ -(1-x_2)(1-x_3)\\ (1-x_1)\\ -x_2(1-x_3)\end{bmatrix}.\]
\end{example}

We now turn our attention to the Cohen-Macaulay property of $R/J_\sC$. Recall that a local or graded ring $A$ is Cohen-Macaulay if there is a regular sequence $f_1,\dots,f_n$ on $A$ with $n=\mathrm{dim}(A)$. If $A$ is not local or graded, then $A$ is Cohen-Macaulay if its localizations at each of its maximal ideals is Cohen-Macaulay. In particular, every zero-dimensional ring is Cohen-Macaulay.

If $A$ is Cohen-Macaulay and $f_1,\dots,f_n$ is a regular sequence on $A$, then $A/\langle f_1,\dots,f_n\rangle$ is also Cohen-Macaulay. Thus, we have the following.

\begin{corollary}\label{cor-CM}  Let $\sC$ be a code, $J_\sC\subseteq R$ its neural ideal, and $\sP(J_\sC)\subseteq S$ its polarization. If $S/\sP(J_\sC)$ is Cohen-Macaulay, then  so is $R/J_\sC$.
\end{corollary}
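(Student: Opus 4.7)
The plan is to realize $R/J_\sC$ as the quotient of $S/\sP(J_\sC)$ by the ideal generated by a regular sequence, and then invoke the standard fact that Cohen-Macaulayness descends along such quotients.

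First I would observe that, by construction of $D$, the quotient map $\pi : S \to R$ induces an isomorphism $S/D \cong R$ sending $y_i \mapsto 1-x_i$, and hence
\[
R/J_\sC \;\cong\; S/(\sP(J_\sC)+D).
\]
By Theorem~\ref{thm-reg-seq}, the sequence $x_1+y_1-1,\dots,x_n+y_n-1$ generating $D$ is a regular sequence on $S/\sP(J_\sC)$. Thus, $R/J_\sC$ is exactly the quotient of the (Cohen-Macaulay, by hypothesis) ring $S/\sP(J_\sC)$ by the ideal generated by a regular sequence.

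The remaining step is the general principle: if $A$ is a Cohen-Macaulay ring and $g_1,\dots,g_n$ is an $A$-regular sequence, then $A/(g_1,\dots,g_n)$ is Cohen-Macaulay. Since $R/J_\sC$ is neither local nor graded, the graded/local versions of this statement do not apply verbatim; we verify it using the definition given in the paper. Let $\bar{\n}$ be a maximal ideal of $A/(g_1,\dots,g_n)$, and lift it to a maximal ideal $\m$ of $A$ containing $(g_1,\dots,g_n)$. Localizing at $\m$, the sequence $g_1,\dots,g_n$ remains regular on the local Cohen-Macaulay ring $A_\m$ (regularity is preserved by the flat extension $A \to A_\m$), and lies in the maximal ideal $\m A_\m$. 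By the local version of the result, $A_\m/(g_1,\dots,g_n)A_\m$ is Cohen-Macaulay, and this ring is canonically isomorphic to $(A/(g_1,\dots,g_n))_{\bar{\n}}$. Applying this with $A=S/\sP(J_\sC)$ and the sequence $x_i+y_i-1$ finishes the proof.

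The main obstacle here is purely bookkeeping: ensuring the ``quotient by a regular sequence'' fact is applied in the non-local, non-graded setting used throughout the paper. Once the localization-at-maximals step is spelled out as above, the corollary follows as a formal composite of the isomorphism $R/J_\sC \cong S/(\sP(J_\sC)+D)$ with Theorem~\ref{thm-reg-seq}.
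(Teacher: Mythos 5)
Your proof is correct and follows the same route as the paper: combine the isomorphism $R/J_\sC \cong S/(\sP(J_\sC)+D)$ with Theorem~\ref{thm-reg-seq} and the general fact that Cohen-Macaulayness descends to quotients by regular sequences. You go further than the paper in actually verifying that general fact in the non-local, non-graded setting by localizing at maximal ideals and using flatness of localization; the paper simply asserts the descent fact, so your extra bookkeeping is a sound elaboration of, rather than a departure from, the intended argument.
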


The advantage of this corollary is that $\sP(J_\sC)$ is a squarefree monomial ideal, so one can determine whether it is Cohen-Macaulay via the topology of its Stanley-Reisner complex. Unfortunately, the converse of Corollary~\ref{cor-CM} fails.

\begin{example} Let $\sC$ be the code $\{ 000,110,011,101 \}$; this is the code E4 in \cite{CIVY}. For this code, $R/J_\sC$ is zero-dimensional, hence Cohen-Macaulay. The polarization $S/\sP(J_\sC)$ has some components of dimension three, and some of dimension four. Since $S/\sP(J_\sC)$ is graded and not equidimensional, it is not Cohen-Macaulay.
\end{example}

\section{The Polar Complex}

In this section we consider the primary decomposition the polarization of a pseudomonomial ideal. As the polar neural ideal is a squarefree monomial ideal, there is a simplicial complex associated to it via the Stanley-Reisner correspondence. We define the \textbf{polar complex} of a code $\sC$ to be the Stanley-Reisner complex of the polar ideal $\sP(I_{\sC})$. By the Stanley-Reisner correspondence, to describe the minimal primes of the polar ideal is equivalent to describing the facets of the polar complex. To these equivalent ends, we start by introducing some notation and definitions. For a subset $W$ of the set of $2n$ variables $\{x_1,\ldots, x_n, y_1,\ldots,y_n\}$ we define the following index sets 
\begin{itemize}
\item[ ] $x(W) :=\{ i \ \vert \ x_i \in W, \, y_i \notin W\}$, \ \ and  \ \ $y(W) :=\{ i \ \vert \  y_i \in W, \, x_i \notin W\}$, 
\item[ ] $b(W) :=\{ i \ \vert \  x_i \in W, \, y_i \in W\}$, \ \ and  \ \ $n(W) :=\{ i \ \vert \  x_i \notin W, \, y_i \notin W\}$.
\end{itemize}
Notice that these index sets are disjoint by definition.  

For a code $\sC$ and a subset $S\subseteq [n]$, the \textbf{quotient code} $\sC / S$ denotes the set of codewords $\bar{c} \in\{0,1\}^{[n]\setminus S}$ such that there is a codeword $c \in \sC$ with $c_j = \bar{c}_j$ for all $j\notin S$. The quotient code of $\sC$ by $S$ is the same as the code obtained from $\sC$ by deleting the neurons $S$ in the terminology of \cite{Curto-Youngs}.

Let $\mathbf{q}_W$ denote the prime ideal that is generated by the variables in the subset $W$. By the definition of $\mathbf{q}_W$, it is evident that $W' \subseteq W$ if and only if $\mathbf{q}_{W'} \subseteq \mathbf{q}_W$.

To a subset $W$ of the set of $2n$ variables $\{x_1,\dots, x_n, y_1,\dots,y_n\}$ we define \textbf{the interval associated to $W$} to be the boolean interval $V_W \subseteq \{0,1\}^{[n]\setminus b(W) }$ given by
\[ V_W = \{ c \in \{0,1\}^{[n]\setminus b(W) } \ | \ c_i=0 \ \text{for all} \ i\in x(W), \ c_j=1 \ \text{for all} \ j\in y(W) \}. \]

Given this notation, we can characterize the monomial primes containing the polar ideal.

\begin{theorem} 
The monomial prime $\mathbf{q}_W$ contains the polar ideal $\sP(J_{\sC})$ if and only if $V_W$ is an interval of $\sC/ b(W)$.
\end{theorem}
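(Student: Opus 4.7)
The plan is to dualize: we ask when $\sP(J_\sC)$ is \emph{not} contained in $\mathbf{q}_W$. Since $\sP(J_\sC)$ is a monomial ideal and $\mathbf{q}_W$ is a monomial prime, this happens iff some generator misses $\mathbf{q}_W$; and by Theorem~\ref{theorem-conditions-polarization} we may take the generators of $\sP(J_\sC)$ to be $\{\sP(f) \,:\, f \text{ a pseudomonomial in } J_\sC\}$. So the question becomes: does there exist a pseudomonomial $f \in J_\sC$ such that $\sP(f)$ is divisible by no variable in $W$?

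Encode $f = \prod_{i \in A} x_i \prod_{j \in B}(1-x_j)$ by the tuple $\alpha \in \{0,1,\ast\}^{[n]}$ with $\alpha_i = 1$ on $A$, $\alpha_j = 0$ on $B$, and $\alpha_k = \ast$ elsewhere. The condition ``no variable of $\sP(f)$ lies in $W$'' translates, case-by-case on the partition $[n] = x(W) \sqcup y(W) \sqcup b(W) \sqcup n(W)$, into: $\alpha_i \in \{0,\ast\}$ for $i \in x(W)$, $\alpha_i \in \{1,\ast\}$ for $i \in y(W)$, $\alpha_i = \ast$ for $i \in b(W)$ (since $x_i, y_i \in W$ simultaneously rule out $\alpha_i = 1$ and $\alpha_i = 0$), and $\alpha_i$ is unrestricted on $n(W)$. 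Meanwhile, by Theorem~\ref{thm-PM-PD}, the condition $f \in J_\sC$ is equivalent to $V_\alpha \cap \sC = \emptyset$.

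Next I would reduce to the ``minimal'' such $\alpha$s. Because $V_\alpha \subseteq V_\beta$ whenever $\beta$ is obtained from $\alpha$ by converting some non-$\ast$ entries to $\ast$, if any $\alpha$ satisfying the constraints above has $V_\alpha \cap \sC = \emptyset$ then so does the $\alpha$ obtained by resolving every optional $\ast$ to a concrete value. These minimal $\alpha$s have $\alpha_i = 0$ on $x(W)$, $\alpha_i = 1$ on $y(W)$, $\alpha_i = \ast$ on $b(W)$, and $\alpha_i \in \{0,1\}$ on $n(W)$. Restricting such an $\alpha$ to $[n]\setminus b(W)$ gives a natural bijection with the elements of $V_W$, and for the corresponding $\alpha$ we have $V_\alpha \cap \sC \neq \emptyset$ iff the matching element of $V_W$ lifts to a codeword of $\sC$ in the $b(W)$ coordinates, i.e., lies in $\sC/b(W)$.

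Assembling: $\sP(J_\sC) \not\subseteq \mathbf{q}_W$ iff some element of $V_W$ fails to lie in $\sC/b(W)$; the contrapositive is the theorem. The main thing to watch is the bookkeeping on the four-part partition of $[n]$ induced by $W$, especially the observation that the $b(W)$ coordinates are forced to be wildcards, which is exactly what causes the ambient space to drop to $\{0,1\}^{[n]\setminus b(W)}$ and one passes to the quotient code $\sC/b(W)$.
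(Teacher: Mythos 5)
Your proof is correct, and it takes a genuinely different route from the paper's. The paper's proof works by reduction: it first handles the case $b(W)=\emptyset$ using Proposition~\ref{prop-primes} and Theorem~\ref{thm-PM-PD}, then for general $W$ it splits $\mathbf{q}_W = \mathbf{m}_{b(W)} + \mathbf{q}_{W'}$, passes to the quotient ring $S/\mathbf{m}_{b(W)}$, and invokes a result from Curto--Youngs (their \S 1.7.3) to identify the image of $\sP(J_\sC)$ with $\sP(J_{\sC/b(W)})$. Your argument is more direct and self-contained: you dualize, characterize which pseudomonomials $f \in J_\sC$ have $\sP(f)$ disjoint from $W$ via the four-part partition $[n] = x(W)\sqcup y(W)\sqcup b(W)\sqcup n(W)$, and reduce to the ``minimal'' tuples $\alpha$ whose restriction to $[n]\setminus b(W)$ biject with $V_W$. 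This replaces the quotient-ring bookkeeping and the external citation with a single combinatorial reduction, at the cost of leaning on the characterization ``$f \in J_\sC \iff V_\alpha \cap \sC = \emptyset$'' for pseudomonomials $f$. You cite Theorem~\ref{thm-PM-PD} for this fact, but it is a consequence of it rather than a restatement: one must observe that $f \notin \mathbf{p}_\alpha$ precisely when $V_\alpha \cap V_\beta \neq \emptyset$ (where $\beta$ encodes $f$), and then take $\alpha$ ranging over codewords of $\sC$ to deduce the equivalence from the primary decomposition. That derivation is short, but it should be spelled out or at least flagged, since it is the one point where the argument is doing real work beyond pure bookkeeping. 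Everything else — the translation of ``no variable of $\sP(f)$ lies in $W$'' into constraints on $\alpha$, the fact that $b(W)$ coordinates are forced to be wildcards, the monotonicity $V_{\alpha'} \subseteq V_\alpha$ under resolving stars, and the identification of the resolved $\alpha$'s with $V_W$ — checks out.
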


\begin{proof} First we consider the case when the given subset $W$ has $b(W) = \emptyset$, so $\sC/b(W) = \sC$. Therefore, $V_W$ is the same interval as $V_\alpha$ corresponding to $\alpha \in \{ 0,1, *\}^{n}$ with $$\alpha_i = \begin{cases}  0 & \text{ if  } i \in x(W) \\ 
1  & \text{ if  } i \in y(W) \\ 
* & \text{ if  } i \in n(W) 
\end{cases}$$
as in Section~2. By Theorem~\ref{thm-PM-PD} the pseudomonomial neural ideal $J_\sC$ is contained in the pseudomonomial prime $\mathbf{p}_\alpha$ if and only if the interval $V_\alpha=V_W$ is contained in $\sC$. By the assumption $b(W)=\emptyset$, we are in the setting of Proposition~\ref{prop-primes}(i), so $\mathbf{p}_\alpha$ contains $J_\sC$ if and only if $\mathbf{q}_W  = \sP(\mathbf{p}_{\alpha})$ contains $\sP(J_{\sC})$, which verifies this case.

Now let $W$ be an arbitrary subset of  $\{x_1,\dots, x_n, y_1,\dots,y_n\}$. Write $\mathbf{q}_W=\mathbf{m}_{b(W)} + \mathbf{q}_{W'}$, where $\mathbf{m}_{b(W)} =\langle \{x_i, y_i \ | \ i \in b(W) \}\rangle$ and $\mathbf{q}_{W'}$ is generated by the remaining variables in $W$. Note that $W'$ is a subset of the variables $\{x_i, y_i \ | \ i\in [n] \setminus b(W) \}$ and $b(W')=\emptyset$. Now, $\mathbf{q}_W \supseteq \sP(J_{\sC})$ if and only if $(\mathbf{m}_{b(W)} + \mathbf{q}_{W'} )/\mathbf{m}_{b(W)} \supseteq (\mathbf{m}_{b(W)} + \sP(J_{\sC}) )/\mathbf{m}_{b(W)}$. Identifying $\FF_2[x_i, y_i \ | \ i \in [n] \, ] / (\mathbf{m}_{b(W)})$ with $S'=\FF_2[x_i , y_i \ | \ i \in [n] \setminus b(W) ]$, and the ideals with their images in $S'$, the previous containment is equivalent to $\mathbf{q}_{W'} \supseteq \mathcal{J}$, where $\mathcal{J}$ is the monomial ideal generated by the monomial generators of $\sP(J_{\sC})$ that are not divisible by any variable with index in $b(W)$.

We claim that $\mathcal{J}=\sP(J_{\sC / b(W) })$. Indeed, by \cite[1.7.3]{Curto-Youngs}, given a code $\sC$, the canonical form of $\sC/\{i\}$ consists of the elements of the canonical form of $\sC$ that are not divisible by either $x_i$ or $y_i$; the analgous statement for a set of neurons follows immediately. The claim then follows from the description of the polarization via canonical forms. 

We conclude the proof. We have that  $\mathbf{q}_W \supseteq \sP(J_{\sC})$ if and only if $\mathbf{q}_{W'} \supseteq \sP(J_{\sC / b(W) })$ in $S'$. Since $b(W')=\emptyset$, by the case established in the first paragraph, $\mathbf{q}_{W'} \supseteq \sP(J_{\sC / b(W) })$ if and only if $W'$ is an interval of ${\sC / b(W) }$, as required.
\end{proof}

 Then the Stanley-Reisner correspondence says the following
 \begin{corollary}[Stanley-Reisner correspondence-Polar complex]\textit{ $W$ is a face of the complex if and only if  the interval ``anything" in $b(W)$ indices, 1's in $x(W)$ indices, 0's in $y(W)$ indices is an interval of $\sC/n(W)$.}
 \end{corollary}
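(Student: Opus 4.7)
The plan is to deduce the corollary from the preceding theorem by applying that theorem to the complementary subset $W^c := \{x_1,\ldots,x_n,y_1,\ldots,y_n\} \setminus W$, and then translating via the standard Stanley--Reisner dictionary. First I would recall that for a squarefree monomial ideal $I$ in a polynomial ring with variable set $Z$, a subset $W \subseteq Z$ is a face of the Stanley--Reisner complex if and only if $\prod_{z \in W} z \notin I$, which is equivalent to the monomial prime $\mathbf{q}_{W^c}$ (generated by the variables outside $W$) containing $I$. This reduces the corollary to describing when $\mathbf{q}_{W^c} \supseteq \sP(J_\sC)$.

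Next, I would apply the preceding theorem with $W^c$ in place of $W$: it yields that $\mathbf{q}_{W^c}$ contains $\sP(J_\sC)$ if and only if $V_{W^c}$ is an interval of $\sC/b(W^c)$.

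The remaining step is pure combinatorial bookkeeping. Complementation in the variable set swaps membership with non-membership, so the four index sets transform under $W \mapsto W^c$ as
\[
x(W^c) = y(W), \quad y(W^c) = x(W), \quad b(W^c) = n(W), \quad n(W^c) = b(W).
\]
Substituting these identities into the definition of $V_{W^c}$ exhibits it as the boolean interval inside $\{0,1\}^{[n]\setminus n(W)}$ that is free on the $b(W)$ coordinates, equal to $1$ on the $x(W)$ coordinates, and equal to $0$ on the $y(W)$ coordinates, while the target quotient $\sC/b(W^c)$ becomes $\sC/n(W)$---precisely the interval and quotient code appearing in the statement.

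There is no serious obstacle beyond this translation; the only point requiring care is keeping the simultaneous $x$--$y$ and $b$--$n$ swaps straight, so that the ``$1$'' and ``$0$'' coordinate roles and the identity of the ``free'' coordinates in the final description all agree with what the corollary asserts.
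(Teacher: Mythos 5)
Your proof is correct and takes essentially the same approach as the paper's: the paper's proof is a one-sentence appeal to the fact that faces of the Stanley--Reisner complex are complements of variable sets generating monomial primes containing the ideal, which is exactly the reduction you make before applying the preceding theorem to $W^c$. Your index-set computations $x(W^c)=y(W)$, $y(W^c)=x(W)$, $b(W^c)=n(W)$, $n(W^c)=b(W)$ are accurate and supply the bookkeeping the paper leaves implicit.
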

 \begin{proof}
 This follows from the fact that faces of the Stanley-Reisner complex are complements of sets of variables that form monomial primes containing the Stanley-Reisner ideal.
 \end{proof}
 
 We may also characterize the minimal primes of $\sP(J_{\sC})$.
 
 \begin{corollary} The minimal primes of $\sP(J_{\sC})$ are the primes $\mathbf{q}_W$ such that 
 	\begin{itemize}
 		\item $V_W$ is a maximal interval of the quotient code $\sC / b(W)$, and
 		\item  the boolean interval in $\{0,1\}^{[n]\setminus b(W) \cup \{j\} }$ consisting of elements of $V_W$ in the $[n]\setminus b(W)$ positions and a fixed constant value in the $j$ position does not belong to  $\sC / (b(W) \setminus \{j\})$ for any $j\in b(W)$.
 	\end{itemize}
 \end{corollary}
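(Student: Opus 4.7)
The plan is to leverage the preceding theorem, which says $\mathbf{q}_W \supseteq \sP(J_\sC)$ if and only if $V_W$ is an interval of $\sC/b(W)$. Since the monomial primes are ordered by inclusion of their generating variable sets (that is, $\mathbf{q}_{W'} \subseteq \mathbf{q}_W$ iff $W' \subseteq W$), the prime $\mathbf{q}_W$ is minimal over $\sP(J_\sC)$ precisely when it contains $\sP(J_\sC)$ and no $\mathbf{q}_{W'}$ with $W' \subsetneq W$ does. A standard one-step reduction shows that it is enough to verify this for every $W' = W \setminus \{v\}$ with $v \in W$: if any smaller $W' \subsetneq W$ worked, so would $W \setminus \{v\}$ for any $v \in W \setminus W'$.

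Next I would perform a case analysis on how removing a single variable $v \in W$ transforms the four index sets attached to $W$. There are four sub-cases, according to whether $v = x_i$ or $v = y_i$, and whether $i \in x(W) \cup y(W)$ or $i \in b(W)$. If $v = x_i$ with $i \in x(W)$, then $i$ migrates from $x(W)$ to $n(W \setminus \{v\})$, the set $b$ is unchanged, and $V_{W \setminus \{v\}}$ is obtained from $V_W$ by promoting coordinate $i$ from the fixed value $0$ to a wildcard; the case $v = y_i$ with $i \in y(W)$ is symmetric, promoting $i$ from $1$ to a wildcard. If $v = x_i$ with $i \in b(W)$, then $i$ moves from $b(W)$ to $y(W \setminus \{v\})$, so $b$ loses $i$, and $V_{W \setminus \{v\}} \subseteq \{0,1\}^{[n] \setminus (b(W) \setminus \{i\})}$ agrees with $V_W$ outside position $i$ and is fixed to $1$ at position $i$; the case $v = y_i$ with $i \in b(W)$ is analogous, with position $i$ fixed to $0$.

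Finally I would match these sub-cases to the two bullets of the corollary using the theorem. The first two sub-cases together cover every $v$ whose index lies in $x(W) \cup y(W)$, and the enlargements of $V_W$ they produce are exactly those obtained by promoting some fixed coordinate of $V_W$ to a wildcard; the requirement that none of these lies in $\sC/b(W)$ is exactly the statement that $V_W$ is a maximal interval of $\sC/b(W)$, which is the first bullet. The last two sub-cases cover every $v$ whose index lies in $b(W)$, and the enlargements they produce are exactly the one-coordinate extensions of $V_W$ obtained by re-introducing some $j \in b(W)$ with either possible fixed value; the requirement that none of these lies in $\sC/(b(W) \setminus \{j\})$ is the second bullet (reading ``a fixed constant value'' as ``either choice of fixed value''). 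The main obstacle is the index bookkeeping: one must carefully track how the four sets $x, y, b, n$ and the interval $V_W$ transform under removing a single variable and verify that the four removal cases match cleanly with the two geometric enlargements named in the bullets. Once this dictionary is in place, the previous theorem closes the argument immediately.
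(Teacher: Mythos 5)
Your proof is correct and takes essentially the same approach as the paper's: both reduce minimality to removing a single variable from $W$, then split into cases according to whether the removed variable's partner is also in $W$ (that is, whether its index lies in $b(W)$ or in $x(W) \cup y(W)$), matching the two cases to the two bullets via the preceding theorem. Your version simply spells out the bookkeeping of how the index sets $x(W)$, $y(W)$, $b(W)$, $n(W)$ and the interval $V_W$ transform under removal of a single variable, which the paper leaves more terse.
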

 \begin{proof} If $W$ corresponds to a nonminimal prime $\mathbf{q}_W$, then it is possible to remove a variable from $W$ to get $W'$ and still have that $\mathbf{q}_{W'}$ contains the polar ideal. If the variable is an $x_i$ or $y_i$ such that the other is not in $W$, then $V_{W'}$ is an interval of $\sC / b(W)$ properly containing $V_W$. If the variable is an $x_i$ or $y_i$ such that the other is in $W$, then $V_{W'}$ consists of elements of $V_W$ plus a fixed value in the $j$ position does not belong to  $\sC / (b(W) \setminus \{j\})$ for any $j\in b(W)$.
 \end{proof}
 
 In particular, maximal intervals of $\sC$ correspond to minimal primes of $\sP(J_{\sC})$, but not every minimal prime arises in this way.
 
 We illustrate this primary decomposition in an example.
 \begin{example} Consider the neural ideal 
 \[J_\sC = \langle x_1x_3, x_3(1-x_2), x_2(1-x_1)(1-x_3)\rangle\] in canonical form, corresponding to the code $\sC = \{000,100,110,011\}$; this is neural code B5 in \cite{CIVY}. Its polar ideal is $ \sP(J _{\sC})  = \langle x_1x_3, y_2x_3, y_1x_2y_3\rangle $. We depict the code as a subset of the Boolean lattice, with codewords in black and noncodewords in red: 
\begin{center}
\begin{tikzpicture}[scale=0.7, tdplot_main_coords]
\coordinate (100) at (0,0,0);
\coordinate (101) at (0,2,0);
\coordinate (111) at (0,2,1.8);
\coordinate (110) at (0,0,1.8);
\coordinate (000) at (2,0,0);
\coordinate (001) at (2,2,0);
\coordinate (011) at (2,2,1.8);
\coordinate (010) at (2,0,1.6);

\node[left] at (100) {100};
\node[left] at (101) {101};
\node[above] at (111) {111};
\node[left] at (110) {110};
\node[below] at (000) {000};
\node[right] at (001) {001};
\node[right] at (011) {011};
\node[right] at (010) {010};

\draw(111)--(110)--(010)--(011); 
\draw  (001)--(101)--(100)--(000);
\draw(110)--(100)--(101)--(111);
\draw(111)--(011)--(001)--(000)--(010);

\draw (111)--(110)--(100)--(000)--(010)--(110);
\draw[black, line width=0.6mm]  (000)--(100)--(110);
\draw[red, line width=0.6mm]  (001)--(101)--(111);

\foreach \position in {(100), (011),  (000),  (010),(110)}
 \fill \position circle (0.15cm); 
 \foreach \position in {(101), (001), (111), (010)}
 \fill \position[red]  circle(0.15cm); 
\end{tikzpicture}
\end{center}

The isolated codeword $011 \in \sC$ corresponds to the subset $W_1 = \{ x_1,y_2,y_3\}$ gives the minimal prime $\mathbf{q}_{W_1} = \langle x_1, y_2, y_3\rangle$. We also have intervals $[*\,0\,0]$ and $[1\,*\,0]$ contained in $\sC$, yielding minimal primes $\mathbf{q}_{W_2} = \langle x_2, x_3\rangle$ and $\mathbf{q}_{W_3} = \langle y_1, x_3\rangle$.  The subsets $W_1,W_2, W_3$ of the variable set $\{ x_1,x_2, x_3,y_1,y_2,y_3\}$ are the only subsets with $b(W_i) = \emptyset$ that correspond to primes containing $\sP(J_\sC)$. 


Now we examine the minimal subsets $W$ whose index set $b(W)  = \{i\}$ for each $i = 1,2,3$. We use the symbol ``$-$'' as a placeholder for an omitted coordinate for intervals in the quotient codes.

 \begin{tikzpicture}[scale=0.7, tdplot_main_coords]
\coordinate (100) at (0,0,0);
\coordinate (101) at (0,2,0);
\coordinate (111) at (0,2,1.8);
\coordinate (110) at (0,0,1.8);

\node[left] at (100) {-00};
\node[right] at (101) {-01};
\node[right] at (111) {-11};
\node[left] at (110) {-10};

\draw(100) -- (101) -- (111) -- (110) -- cycle;
\draw[line width=0.4mm] (100) -- (110);
\draw[line width=0.4mm] (110) -- (111);
\foreach \position in {(111), (110)}
 \fill \position circle (0.06cm); 
 \foreach \position in {(100)}
  \fill \position circle (0.06cm); 
 \foreach \position in {(101)}
 \fill \position[red]  circle(0.09cm); 
 
 \node[text width=10cm, anchor=west, right] at (0,6,-2)
{
Clearly $[-\,1\,*]$ and $[-\,*\,0]$ are the maximal intervals in $\sC/\{1\}$, therefore we get primes  $\mathbf{q}_{W_4} = \langle x_1, y_1, y_2\rangle$, and $\mathbf{q}_{W_5} = \langle x_1, y_1, x_3\rangle$ containing $J_{\sC}$. As neither of these intervals give intervals in $\sC$ with ``$-$'' replaced by either $0$ or $1$, these correspond to minimal primes of $J_{\sC}$. };
\end{tikzpicture}

 \begin{tikzpicture}[scale=0.7, tdplot_main_coords]
\coordinate (111) at (0,1.8,1.8);
\coordinate (110) at (0,0,1.8);
\coordinate (011) at (2,1.8,1.8);
\coordinate (010) at (2,0,1.8);

\node[left] at (010) {0-0};
\node[right] at (011) {0-1};
\node[right] at (111) {1-1};
\node[left] at (110) {1-0};

\draw(110) -- (010) -- (011) -- (111) -- cycle;
\draw[line width=0.4mm] (010) -- (011);
\draw[line width=0.4mm] (010) -- (110);
\foreach \position in {(011), (010)}
 \fill \position circle (0.06cm); 
 \foreach \position in {(110)}
  \fill \position circle (0.06cm); 
 \foreach \position in {(111)}
 \fill \position[red]  circle(0.09cm); 
 
  \node[text width=10cm, anchor=west, right] at (0,6,-2)
{
Similarly, $\sC/\{2\}$ contains the intervals $[0\,-\,*]$ and $[*\,-\,0]$ so we get primes $\mathbf{q}_{W_6} = \langle x_2, y_2, x_1\rangle$ and $\mathbf{q}_{W_7} = \langle x_2, y_2, x_3\rangle$ containing $J_{\sC}$. Again, these do not give intervals in $\sC$ with ``$-$'' replaced by $0$ or $1$, so these are minimal primes of $J_{\sC}$.
};
\end{tikzpicture}

 \begin{tikzpicture}[scale=0.7, tdplot_main_coords]
\coordinate (100) at (0,0,0);
\coordinate (110) at (0,0,1.8);
\coordinate (000) at (2,0,0);
\coordinate (010) at (2,0,1.8);

\node[left] at (100) {10-};
\node[left] at (110) {11-};
\node[right] at (010) {01-};
\node[right] at (000) {00-};

\draw(100) -- (110) -- (010) -- (000) -- cycle;
\draw[line width=0.4mm] (000) -- (100);
\draw[line width=0.4mm] (000) -- (010);
\draw[line width=0.4mm] (110) -- (010);
\draw[line width=0.4mm] (110) -- (100);
\foreach \position in {(100), (010), (000),(110)}
\fill \position circle (0.06cm);

 \node[text width=10cm, anchor=west, right] at (0,5,-2)
{
Finally $\sC/\{3\}$ has the whole interval $[*\,*\,-]$, so we have a prime $\mathbf{q}_{W_8} =\langle x_3, y_3\rangle$; it corresponds to a minimal prime for the same reasoning as the cases above.
};
\end{tikzpicture}

Put together, we get the minimal primary decomposition
\begin{align*}
\sP(J_\sC) &= \langle x_1, y_2, y_3\rangle \cap\langle x_2, x_3\rangle \cap \langle y_1, x_3\rangle \cap \\
 & \langle x_1, y_1, y_2\rangle \cap \langle x_2, y_2, x_1\rangle \cap \langle x_2, y_2, x_3\rangle  \cap \langle x_3, y_3\rangle.
 \end{align*}

Notice that depolarization of the primes $\mathbf{q}_{W_1}, \mathbf{q}_{W_2}$, and $ \mathbf{q}_{W_3}$, simply the ones with no pair $x_i, y_i$ included, are the pseudomonomial primes in the decomposition of the polar ideal $J_\sC$.
\end{example}

\section*{Acknowledgements}

This project was initiated as an REU project at the University of Michigan. The second author was supported in part by the NSF Postdoctoral Research Fellowship
DMS~\#1606353.
The third author was supported by an REU stipend from the NSF grant DMS~\#1306992 of Professor Harm Derksen. 

The authors thank Carina Curto and Anne Shiu for helpful suggestions on a draft of this paper. The second author thanks Alex Kunin for interesting discussions on related forthcoming work.

%
%
%
%
%
%
\bibliographystyle{plain}
	\bibliography{myref}

\end{document}